\newtheoremstyle{custom}
  {3pt}
  {3pt}
  {\slshape}
  {}
  {\bfseries}
  {.}
  { }
   {}
\theoremstyle{custom}
\newtheorem{theorem}{Theorem}[subsection]
\newtheorem{proposition}[theorem]{Proposition}
\newtheorem{proposition/definition}[theorem]{Proposition/Definition}
\newtheorem{lemma}[theorem]{Lemma}
\newtheorem{prop}[theorem]{Proposition}
\theoremstyle{definition}
\theoremstyle{remark}
\newtheoremstyle{exercise}
  {3pt}
  {6pt}
  {}
  {}
  {\bfseries}
  {:}
  { }
   {}
\theoremstyle{exercise}
\newtheorem{exercise}[theorem]{Exercise}
\newtheoremstyle{exercises}
  {3pt}
  {6pt}
  {}
  {}
  {\bfseries}
  {:}
  {\newline}
   {}
\theoremstyle{exercise}
\newtheorem{exercises}[theorem]{Exercises}
\def\intprod{\negthinspace
\mathbin{\raisebox{.4ex}{\hbox{\vrule height .5pt width 4pt depth 0pt %
          \vrule height 4pt width .5pt depth 0pt}}}}
\def\boxit#1{\vbox{\hrule height1pt\hbox{\vrule width1pt\kern3pt
  \vbox{\kern3pt#1\kern3pt}\kern3pt\vrule width1pt}\hrule height1pt}}
\def\trank{\text{rank}}
\def\BC{\mathbb C}
\def\BP{\mathbb P}
\def\pp#1{\mathbb P^{#1}}
\def\pp#1{{\mathbb P}^{#1}}
\def\tdim{{\rm dim}}
\def\hd{,...,}
\def\upperp{{}^\perp}
\def\CC{\mathbb C}
\def\11{\mathbf 1}
\def\PP{\mathbb P}
\def\l{\lambda}
\def\a{\alpha}
\def\o{\omega}
\def\b{\beta}
\def\e{\varepsilon}
\def\ot{{\mathord{ \otimes } }}
\def\op{{\mathord{\,\oplus }\,}}
\def\dim{{\rm dim}\;}
\def\La#1{\Lambda^{#1}}
\def\op{\oplus}
\def\ep{\epsilon}
\def\op{\oplus}
\def\t{\tau}
\def\a{\alpha}
\def\b{\beta}
\def\l{\lambda}
\def\ol{\overline}
\def\BP{\mathbb  P}
\def\BC{\mathbb  C}
\def\pp#1{\mathbb  P^{#1}}
\def\ep{\epsilon}
\def\hd{, \hdots ,}
\def\La#1{\Lambda^{#1}}
\def\pp#1{\mathbb  P^{#1}}
\def\tdet{\operatorname{det}}
\def\tperm{\operatorname{perm}}
\def\tdim{\operatorname{dim}}
\def\trank{\operatorname{rank}}
\def\upperp{{}^{\perp}}
\def\be{\begin{equation}}
\def\ene{\end{equation}}
\def\trank{{ {\bold R}}}
\def\trank{{\mathrm {rank}}}
\def\oldet{\ol{GL(W)\cdot [\tdet_n]}} 
\def\oldetc{\ol{GL_{n^2}\cdot [\tdet_n]}}
\newcommand{\tEnd}{\operatorname{End}}
\def\Dual{{\mathcal D}}
\begin{document}

\title[Dual varieties and GCT]{Hypersurfaces with degenerate duals \linebreak
and the Geometric Complexity Theory Program}
\author{J.M. Landsberg, Laurent Manivel and Nicolas Ressayre}
\date{April 2010}
 \begin{abstract} We determine set-theoretic defining equations
 for the variety $Dual_{k,d,N}\subset \BP (S^d\BC^N)$ of hypersurfaces of degree $d$ in $\BC^N$ 
  that have
 dual variety of dimension at most $k$.  We apply these equations
 to the Mulmuley-Sohoni variety $\oldetc\subset \BP (S^n\BC^{n^2})$,
 showing it is an irreducible component of  the variety of hypersurfaces
 of degree $n$ in $\BC^{n^2}$ with dual of dimension at most $2n-2$.
 We    establish additional  geometric properties of the Mulmuley-Sohoni variety and prove a quadratic
 lower bound for the determinental border-complexity of the permanent.  
\end{abstract}
\thanks{ Landsberg  supported by NSF grant  DMS-0805782.\\
NR supported by the French National Research Agency (ANR-09-JCJC-0102-01)}
\email{jml@math.tamu.edu,  laurent.manivel@ujf-grenoble.fr, ressayre@math.univ-montp2.fr}
\maketitle

\section{Introduction}

A  classical problem in linear algebra is to  determine    or bound the smallest integer $n$ such that the permanent 
of an $m\times m$ matrix  may be realized as a linear projection of the determinant
of an $n\times n$ matrix.  L. Valiant \cite{MR526203} proposed 
using this problem as an algebraic analog of the problem of comparing the complexity classes
$\bold{P}$ and $\bold{NP}$.
Call this value of $n$, $dc(\tperm_m)$.  He conjectured that $dc(\tperm_m)$
grows faster than any polynomial in $m$.
The best known lower bound is 
$dc(\tperm_m)\geq \frac {m^2}2$, which was proved
in \cite{MR2126826}.

The definition of $dc(\tperm_m)$ may be rephrased as
follows: let $\ell$ be a linear coordinate on $\BC$, let $\BC\op M_m(\CC)\subset M_n(\CC)$,
be any linear inclusion, where $M_n(\CC)$ denotes the space of complex $n\times n$ matrices;
then $dc(\tperm_m)$ is the smallest
$n$ such that   
$\ell^{n-m}\tperm_m \in \tEnd(M_n(\CC))\cdot \tdet_n$. Here
$u\in \tEnd(M_n(\CC))$ acts by  $(u\cdot \tdet_n)(M):=\tdet_n(u(M))$.

K. Mulmuley and M. Sohoni \cite{MS1,MS2}, have proposed to study the
function $\ol{dc}(\tperm_m)$, which is the smallest 
$n$ such that $[\ell^{n-m}\tperm_m]$ is contained in 
the orbit closure $\oldetc\subset \BP(S^n(M_n(\CC))^*)$.
Here $S^n(M_n(\CC))^*$ denotes the homogeneous polynomials of
degree $n$ on $M_n(\BC)$.
The best known lower bound on this function had been linear.
Note that $\ol{dc}(\tperm_m)\leq dc(\tperm_m)$, the potential
difference being the added flexibility of limiting
polynomials in $\oldetc$ that are not in 
$End(M_n(\CC))\cdot [\tdet_n]$. 
Our main result about $\ol{dc}(\tperm_m)$ is the following quadratic bound.\\

\begin{theorem}
  $\ol{dc}(\tperm_m)\geq \frac{m^2}2$.
\end{theorem}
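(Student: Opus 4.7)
The plan is to derive the bound by combining the paper's main dual-variety theorem with the classical Hessian non-degeneracy of $\tperm_m$ due to Mignon and Ressayre. Set $n := \ol{dc}(\tperm_m)$; by definition $[\ell^{n-m}\tperm_m]\in\oldetc$, and by the main theorem this class lies in $Dual_{2n-2,n,n^2}$. Hence the hypersurface $X = V(\ell^{n-m}\tperm_m)\subset \BP^{n^2-1}$ has dual variety $X^{\vee}$ of projective dimension at most $2n-2$. The rest of the argument is a lower bound on $\dim X^{\vee}$ forcing $2n-2\geq m^2-2$.

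The bound on $\dim X^{\vee}$ comes from the permanent component. The underlying reduced hypersurface is $V(\ell)\cup V(\tperm_m)$, and the dual of a reducible hypersurface is the union of the duals of its components, so $X^{\vee}=V(\ell)^{\vee}\cup V(\tperm_m)^{\vee}$. The dual of the hyperplane $V(\ell)$ is a single point. The other piece $V(\tperm_m)\subset \BP^{n^2-1}$ is a cone over the genuine permanent hypersurface $V_0:=V(\tperm_m)\subset \BP^{m^2-1}$ with vertex the $\BP^{n^2-m^2-1}$ of coordinates not used by $\tperm_m$. Since the dual of a cone is naturally identified with the dual of its base, embedded in the projective hyperplane annihilating the vertex, we get $\dim V(\tperm_m)^{\vee}=\dim V_0^{\vee}$.

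Next I would invoke the Mignon-Ressayre Hessian computation: at a suitable smooth point $x_0\in V_0$ the full Hessian $H_{\tperm_m}(x_0)$ has rank $m^2$. By Euler's relation $x_0$ lies in the kernel of $H_{\tperm_m}(x_0)$ restricted to the affine tangent space $T_{x_0}V_0$, so the induced quadratic form on the projective tangent space $T_{[x_0]}V_0$ still has the maximum possible rank $m^2-2$. Via the standard identification of this rank with the dimension of the image of the Gauss map, one obtains $\dim V_0^{\vee}=m^2-2$. Chaining the inequalities yields $m^2-2\leq \dim X^{\vee}\leq 2n-2$, hence $n\geq m^2/2$.

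The only step that is not immediate is the inclusion $\oldetc\subseteq Dual_{2n-2,n,n^2}$, which is the content of the paper's main structural theorem and is the main obstacle; it requires the explicit set-theoretic description of $Dual_{k,d,N}$ together with the irreducible-component analysis of $\oldetc$. Once that is granted, the rest is assembly of standard projective-geometric facts (duals of reducible hypersurfaces, duals of cones, the Hessian/Gauss-map dictionary) and an appeal to the Mignon-Ressayre lemma; a minor point to check is that the dual-variety-dimension bound transfers from the scheme $V(\ell^{n-m}\tperm_m)$ to its reduced structure, which is automatic since the dual variety depends only on the underlying reduced variety.
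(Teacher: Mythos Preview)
Your overall strategy is the paper's, but you have misidentified where the work lies. The inclusion $\oldetc\subseteq Dual_{2n-2,n,n^2}$ is not the obstacle and does not require the irreducible-component theorem: since $Z(\tdet_n)^*$ is the Segre $\BP^{n-1}\times\BP^{n-1}$, of dimension $2n-2$, and $Dual_{2n-2,n,n^2}$ is Zariski closed and $GL_{n^2}$-stable, the inclusion is immediate. The actual gap is your inference ``$[\ell^{n-m}\tperm_m]\in Dual_{2n-2,n,n^2}$, hence $\dim X^\vee\le 2n-2$.'' By definition $Dual_{k,d,N}$ is the closure of the locus of \emph{irreducible} hypersurfaces with dual of dimension at most $k$; for a non-reduced $P$ lying in this closure there is no a priori bound on $\dim Z(P_{red})^*$. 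Indeed the paper warns, just after Theorem~\ref{degdualeqns}, that its equations vanish on any square $R^2$ regardless of $\dim Z(R)^*$. Your closing remark about passing to the reduced structure does not address this point.

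The paper handles exactly this issue with Lemma~\ref{linfaclem}: for $P=\ell^{d-m}R$ with $R$ irreducible, one writes $H_P$ in block-diagonal form and checks that for a generic $(\kappa+2)$-plane $F$ (where $\kappa=\dim Z(R)^*$) the minor $\det(H_P|_F)$ is \emph{not} a multiple of $P$, so the equations of $Dual_{\kappa-1,d,N}$ do not all vanish at $[P]$. With $R=\tperm_m$ and $\kappa=m^2-2$ (Mignon--Ressayre), this gives $[\ell^{n-m}\tperm_m]\notin Dual_{m^2-3,n,n^2}$. Combined with the easy inclusion into $Dual_{2n-2,n,n^2}$ and the nesting $Dual_k\subset Dual_{k'}$ for $k\le k'$, one obtains $2n-2\ge m^2-2$, i.e.\ $n\ge m^2/2$. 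Your route through $\dim X^\vee$ can be salvaged, but only by the block Hessian computation that is the content of Lemma~\ref{linfaclem}; the step you flagged as ``minor'' is in fact the whole argument.
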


\bigskip
Consider the ideal   of regular functions on 
$S^n(M_n(\CC)^*) $ that are zero on $\ol{GL_{n^2}\cdot [\tdet_n]}$.
We construct an explicit sub-$GL_{n^2}$-module $V_n$ in this ideal which has the following
properties.\\

\begin{theorem}\label{th:eqdet}
  \begin{enumerate}
  \item The $GL_{n^2}$-module $V_n$ contains an irreducible module of highest weight 
$$n(n-1)(n-2)\o_1+ (2n^2-4n-1)\o_2 + 2\o_{2n+1}$$
 and $V_n$ is a subspace of the space of homogeneous polynomials of   degree $n(n-1)$ on $S^n(M_n(\CC))^*$.
\item The variety $\ol{GL_{n^2}\cdot [\tdet_n]}$ is an irreducible component of the zero locus
$\Dual_n$ of $V_n$.
  \end{enumerate}
\end{theorem}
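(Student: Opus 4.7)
The strategy is to apply the earlier general construction of defining equations for $Dual_{k,d,N}$ to the specific parameters $(k,d,N) = (2n-2,n,n^2)$. The classical computation identifying the dual of the determinantal hypersurface with the (closure of the) rank-one locus gives $\dim([\tdet_n])^\vee = 2n-2$, so $[\tdet_n] \in Dual_{2n-2,n,n^2}$. Since the condition that the dual variety has dimension at most $2n-2$ is $GL_{n^2}$-invariant and upper semi-continuous, it passes to the orbit closure, yielding $\oldetc \subseteq Dual_{2n-2,n,n^2} =: \Dual_n$. I would then take $V_n$ to be the $GL_{n^2}$-submodule of the ideal of $\Dual_n$ produced by the earlier construction at these parameters; by construction $V_n$ vanishes on $\oldetc$.

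For part~(1), the degree $n(n-1)$ is read off directly from the general construction specialized to these parameters. To locate the claimed irreducible component of highest weight $n(n-1)(n-2)\o_1 + (2n^2-4n-1)\o_2 + 2\o_{2n+1}$, I would decompose $V_n$ into $GL_{n^2}$-irreducibles via Cauchy's formula and Pieri's rule, tracking the specific tensor constructions that enter the definition of $V_n$. This is essentially a representation-theoretic bookkeeping step, producing an explicit nonzero embedding of the asserted irreducible into $V_n$.

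For part~(2), the inclusion $\oldetc \subseteq V(V_n) = \Dual_n$ is already established. The essential content is that $\oldetc$ is a full irreducible component of $\Dual_n$, rather than being strictly contained in a larger one. My approach is a Jacobian calculation at the distinguished point $[\tdet_n]$: evaluate the derivatives of a spanning set of $V_n$ there, and show that the resulting cotangent map has rank at least the codimension of $\oldetc$ in $\BP(S^n(M_n(\BC))^*)$. Since the tangent space to any irreducible component of $\Dual_n$ through $[\tdet_n]$ lies in the kernel of this map, such a rank bound forces the local dimension of $\Dual_n$ at $[\tdet_n]$ to be at most $\dim\oldetc$, which is the desired conclusion.

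The main obstacle is this Jacobian calculation, since the generators of $V_n$ have large degree $n(n-1)$ and intricate $GL_{n^2}$-equivariant structure. The key simplification is to exploit the large stabilizer $G_{\tdet_n} \subseteq GL_{n^2}$ of $\tdet_n$: the normal space $N_{[\tdet_n]}\oldetc$ is a $G_{\tdet_n}$-module, and by equivariance one only needs to verify that each of its isotypic components is detected by some element of $V_n$. The highest weight module singled out in part~(1) is precisely the one carrying the required isotypic content, so the problem reduces to showing that its projection to $N_{[\tdet_n]}\oldetc$ hits every relevant isotypic component. This equivariant check, while finite, is where the specific arithmetic of the weight $n(n-1)(n-2)\o_1 + (2n^2-4n-1)\o_2 + 2\o_{2n+1}$ must be used essentially, and is the technical heart of the argument.
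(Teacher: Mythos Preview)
Your framework for part~(2) --- compute the Zariski tangent space of $\Dual_n$ at $[\tdet_n]$ and exploit equivariance under the stabilizer --- matches the paper's, but your execution has a gap at the decisive step. You assert that the single $GL_{n^2}$-irreducible of highest weight $\Omega(2n-2,n)$ ``is precisely the one carrying the required isotypic content'' of the conormal space, and then defer the verification as a ``finite equivariant check.'' But as a $GL_n\times GL_n$-module the normal space decomposes into components $S_\lambda A\otimes S_\lambda B^*$, one for each partition $\lambda$ of $n$ other than $1^n$ and $21^{n-2}$; showing that the restriction of one $GL_{n^2}$-irreducible to $GL_n\times GL_n$ hits every one of these is a nontrivial branching problem you do not address, and invoking ``the specific arithmetic of the weight'' does not settle it. The paper avoids this entirely by never differentiating the explicit polynomial equations in $V_n$. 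Instead it differentiates the underlying geometric condition $P\mid\det(H_P|_F)$: writing $\det(H_P|_F)=PQ_F$ and $P_\varepsilon=P+\varepsilon\pi+\cdots$, one obtains for any tangent vector $\pi$ the clean constraint $H_{\pi,w}(X)=c_{X,w}\,\pi(w)$ at a generic $[w]\in Z(\tdet_n)$ and $X$ in the kernel of $H_{\tdet_n,w}$. The crucial structural input is that the Cauchy decomposition $S^n(A^*\otimes B)^*=\bigoplus_\lambda S_\lambda A\otimes S_\lambda B^*$ is \emph{multiplicity free}, so the tangent space is determined by the set of $\lambda$ for which the immanant $IM_\lambda$ satisfies this constraint. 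Testing immanants yields four-term relations on the characters $\chi_\lambda$, and an elementary induction on cycle types shows only $\lambda\in\{1^n,21^{n-2}\}$ survive --- exactly the tangent space to the orbit. This immanant/character-theory reduction is the missing idea in your proposal.

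For part~(1), your plan to locate the irreducible via Cauchy's formula and Pieri's rule is not what the paper does and is not obviously suited to the task (the equations come from Euclidean division, not from an evident tensor construction). The paper instead observes that the equation attached to a flag $D\subset L\subset F\subset W$ is, by construction, a highest weight vector for the associated parabolic, and reads off its weight by a short torus-scaling calculation.
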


\bigskip
Theorem \ref{th:eqdet} provides the first explicit module
of equations in the ideal of $\oldetc$. However $\tdim (\Dual_n)$ grows
exponentially with $n$,   whereas $\tdim(\oldetc)$ is on the order of $n^4$.
In particular,  $\Dual_n$ has other irreducible components, one of which is described in \S 4.
A more precise statement than Theorem~\ref{th:eqdet} is  Theorem \ref{smooth}, which  
implies that  our equations provide a full set 
of local equations of $ \oldetc$ around $[\det_n]$.


\medskip

One can similarly define $dc(P),\ol{dc}(P)$ for an arbitrary polynomial $P$ of degree $n$ in $N$
variables. Such a polynomial, if nonzero, defines a hypersurface $Z(P)\subset\PP^{N-1}$. If $P$ is
reduced, the 
Zariski closure of the set of tangent hyperplanes to this hypersurface is a subvariety $Z(P)^*$
of the dual projective space, called the {\it dual variety} of $Z(P)$.  For generic such $P$,
$Z(P)^*$ is a hypersurface.

\bigskip
\begin{theorem} For any  irreducible   polynomial $P$,  
\label{oldcbnd} $$\ol{dc}(P)\geq \frac{\dim Z(P)^*+1}2.$$
\end{theorem}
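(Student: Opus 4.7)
The plan is to exploit the containment $\oldetc \subset \Dual_n = Dual_{2n-2,n,n^2}$ from Theorem \ref{th:eqdet}(2) in order to transfer a dual-dimension bound from the padded polynomial $\ell^{n-m}P$ back to $P$ itself. Setting $m = \deg P$ and $n = \ol{dc}(P)$, the definition of $\ol{dc}$ supplies a linear inclusion $\BC \op \BC^N \hookrightarrow \BC^{n^2}$ and a coordinate $\ell$ on the $\BC$-factor with $[\ell^{n-m}P] \in \oldetc$. The cited containment then says that the hypersurface cut out by $\ell^{n-m}P$ in $\BP^{n^2-1}$ has dual variety of dimension at most $2n-2$, and the problem reduces to comparing this dimension with $\dim Z(P)^*$.

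The comparison is a two-step reduction. Since $P$ is irreducible of degree $m\geq 2$ (the case $m=1$ gives $\ol{dc}(P)\geq 1$ by inspection) while $\ell$ is supported on a complementary coordinate of $\BC^{n^2}$, the forms $P$ and $\ell$ are coprime, so the reduced support of $\ell^{n-m}P$ is $Z(\ell)\cup Z(P)\subset \BP^{n^2-1}$. The dual of a reduced reducible hypersurface being the union of the duals of its irreducible components,
$$Z(\ell^{n-m}P)^* \;=\; \{[\ell]\} \;\cup\; Z(P)^*_{\BP^{n^2-1}},$$
so $\dim Z(P)^*_{\BP^{n^2-1}} \leq 2n-2$. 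Next, the inclusion $\BC^N \hookrightarrow \BC^{n^2}$ exhibits $Z(P) \subset \BP^{n^2-1}$ as a linear cone with vertex $\BP^{n^2-N-1}$ over the original hypersurface $Z(P)\subset \BP^{N-1}$. The classical cone formula for dual varieties—at a smooth point of a linear cone the affine tangent space is the sum of the vertex and the tangent space to the base, so tangent hyperplanes of the cone and of the base are in bijection via intersection with a complementary linear space—gives $\dim Z(P)^*_{\BP^{n^2-1}} = \dim Z(P)^*$. Combining, $\dim Z(P)^*\leq 2n-2$, whence $n \geq (\dim Z(P)^*+1)/2$ as claimed.

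The proof is essentially assembly. The substantive input is Theorem \ref{th:eqdet}(2); the two reduction steps (dual of a reducible reduced hypersurface, dual of a linear cone) are classical facts from projective geometry. The step that deserves the most careful statement is the cone formula, which is what lets one read the intrinsic invariant $\dim Z(P)^*$ off from the dual of the enlarged hypersurface in $\BP^{n^2-1}$ where the orbit-closure machinery lives. The slight slack in the conclusion (we actually obtain $\dim Z(P)^*\leq 2n-2$) reflects the known bound on $\dim Z(\tdet_n)^*$ built into $\Dual_n$.
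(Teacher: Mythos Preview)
Your argument has a real gap at the step where you pass from $[\ell^{n-m}P]\in\oldetc\subset\Dual_n$ to the assertion that the dual of the hypersurface $Z(\ell^{n-m}P)$ has dimension at most $2n-2$. Theorem~\ref{th:eqdet}(2) only says that $\oldetc$ is a component of the zero locus $\Dual_n$ of the module $V_n$; it does \emph{not} say that $\Dual_n=Dual_{2n-2,n,n^2}$, nor that membership in either set forces the dual of the reduced support to be small. The paper explicitly warns (end of \S2.3) that for non-reduced $Q$ the equations can vanish while the dual of $Q_{red}$ is non-degenerate. Since $\ell^{n-m}P$ is neither irreducible nor (for $n>m+1$) reduced, Katz's formula and the equational description of $Dual_{k,d,N}$ give you no direct control over $\dim Z(P)^*$. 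Your subsequent steps about duals of unions and cones are correct classical facts, but they only become relevant once you know the dual of the limit is small, and that is precisely the point at issue.

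The paper closes this gap with Lemma~\ref{linfaclem}: writing the Hessian of $\ell^{n-m}P$ in block form, one checks directly that $\ell^{n-m}P$ divides $\det(H_{\ell^{n-m}P}|_F)$ for every $(k+3)$-plane $F$ if and only if $\dim Z(P)^*\le k$. Combined with $[\ell^{n-m}P]\in\Dual_n$ this yields $\dim Z(P)^*\le 2n-2$. Your route could be repaired by proving a semicontinuity statement---that for a one-parameter degeneration $Q_t\to Q_0$ of irreducible hypersurfaces, each irreducible component of $(Q_0)_{red}$ has dual of dimension at most $\limsup\dim Z(Q_t)^*$---for instance by tracking the Gauss image through the limit. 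But that argument is not in your write-up, and without it the proof does not go through; in effect you would be re-deriving the content of Lemma~\ref{linfaclem} by other means.
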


Theorem  \ref{oldcbnd} is obtained by partially solving    
a question in classical algebraic geometry (Theorem \ref{degdualeqns}): find
set-theoretic defining equations for the variety $Dual_{k,d,N}\subset \BP(S^d\BC^N)$ of hypersurfaces of
degree $d$ in $\BC\pp{N-1}$ whose dual variety has dimension at most $k$.

\bigskip
While it was generally understood that $\tEnd(M_n(\CC))\cdot [\tdet_n]
\subset \oldetc$ was a proper inclusion, it had not been known if the difference
was potentially significant. Proposition \ref{Lambda} exhibits an explicit
codimension one $GL_{n^2}(\CC)$-orbit that is contained in  the boundary of $\oldetc$ but not
contained in $\tEnd(\BC^{n^2})\cdot \tdet_n$, at least when $n$ is odd.
In particular, we exhibit   an explicit sequence of polynomials $P_m$ with
$\ol{dc}(P_m)<dc(P_m)$.\\

\section{Hypersurfaces with degenerate dual varieties}

\subsection{Katz's dimension formula}\label{katzformula}

Let $W$ be a complex vector space of dimension $N$, and $P\in S^dW^*$
a homogeneous polynomial of degree $d$. Let $Z(P)\subset\PP W$ denote the 
hypersurface defined by $P$. If $P$ is irreducible, 
then $Z(P)$ and its dual variety $Z(P)^*$, the Zariski closure of the 
set of tangent hyperplanes to $Z(P)$, are both irreducible. 
The Katz dimension formula \cite{GKZ} states that 
$$\dim Z(P)^* = \trank (H_{P,w})-2,$$
where $H_{P,w}$ denotes the Hessian of $P$ at $w$, a general point of the affine 
cone over $Z(P)$. 
Recall that the Hessian can be defined, once a coordinate system on $W$ has been
chosen, as the symmetric matrix of second partial derivatives of $P$. Intrinsically, 
it is just the quadratic form constructed from $P$ by polarization: 
$$H_{P,w}(X):= P(w,\ldots ,w,X,X).$$

Katz's formula implies that $Z(P)^*$ has dimension less
or equal to $k$ if and only if,
for any $w\in W$ such that $P(w)=0$, and any $(k+3)$-dimensional subspace $F$ of $W$, 
$$\det (H_{P,w}|_F) =0. $$
  
Equivalently (assuming $P$ is  irreducible), for any such subspace $F$, the polynomial $P$ must divide   $\det (H_P|_F)$,
a polynomial of degree $(k+3)(d-2)$.

\subsection{Pairs of polynomials such that one divides the other}

Consider two homogeneous polynomials $P,Q$ on $W=\BC^N$, of respective degrees $d,e$. 
We determine    equations on their coefficients that are implied by the 
condition that $P$ divides $Q$. 

There is an obvious solution in the slightly different situation where $P$ and $Q$
are (non-homogeneous) polynomials in a single variable: one simply   performs 
the Euclidian division of $Q$ by $P$ and requires that the remainder $R$ be zero. The 
ideal defined by this condition is described in \cite{MR898165}. 

In our   situation, we can first restrict $P$ and $Q$ to some plane $L$ in
$W$, and choose coordinates $x,y$ on $L$. The restricted polynomials $P_L$ and $Q_L$ 
are then binary forms in these coordinates. Then     set  $y=1$ and perform
a  Euclidean division on the resulting polynomials in $x$. After rehomogenization, 
we obtain
\begin{equation}\label{euclide}
Q_L(x,y) = P_L(x,y)D_L(x,y)+y^{e-d+1}R_L(x,y),
\end{equation}
where $R_L(x,y)$ is homogeneous of degree $d-1$. The condition $R_L=0$
depends on the choice of the coordinates $x$ and $y$, but up to scale,    the 
coefficient $R_{L,d-1}$ of $x^{d-1}$   only depends on the choice of the
coordinate $y$. That is,  the condition $R_{L,d-1}=0$,
considered as a polynomial equation in the coefficients of $P$ and $Q$, 
  only depends on the choice of $L$ and of the line $D$ in $L$ defined by 
the equation $y=0$. 

To make the connection with \cite{MR898165},   write 
$$\begin{array}{l}
Q_L(x,y)=\sum_{i=0}^eq_ix^iy^{e-i}=q_e\prod_{k=1}^e(x-y\alpha_k), \\
P_L(x,y)=\sum_{j=0}^dp_jx^jy^{d-j}=p_d\prod_{l=1}^d(x-y\beta_l).
\end{array}$$
Divide equation (\ref{euclide}) by $P_L(x,y)$ and set $x=1$. We get an identity between
power series in $y$, to which $D_L$ contributes only up to degree $e-d$. We conclude that 
$R_{L,d-1}/p_d$ is  equal to the coefficient of $y^{e-d+1}$ in 
$$\frac{Q_L(1,y)}{P_L(1,y)}=\frac{q_e\prod_{k=1}^e(1-y\alpha_k)}{p_d\prod_{l=1}^d(1-y\beta_l)}
=\frac{q_e}{p_d}\sum_{m\ge 0}s_m(\beta-\alpha)y^m,$$
where the last equality can be taken as a definition of the symmetric functions $s_m(\beta-\alpha)$. 
The condition that $R_{L,d-1}=0$ is thus equivalent to the condition that 
$$s_{e-d+1}(\beta-\alpha)=0.$$ 
In order to get a polynomial equation in the coefficients of $Q_L$ and $P_L$, we    modify
the expression slightly. Write  
$$\frac{Q_L(1,y)}{P_L(1,y)}=\frac{Q_L(1,y)}{p_d(1+\pi(y))}=\frac{Q_L(1,y)}{p_d}\sum_{m\ge 0}(-1)^m\pi(y)^m,$$
where $\pi(y)=\sum_{j=1}^d\frac{p_{d-j}}{p_d}y^j$. Therefore, the coefficient of $y^{e-d+1}$ can be 
expressed as
$${ 
\hat R(Q,P):=}\frac 1 {p_d}\sum_{i=0}^eq_i\sum_{j_1+\cdots +j_r=-d+1+i}(-1)^r\frac{p_{d-j_1}\cdots p_{d-j_r}}{p_d^r}.$$
In that sum the maximal value of $r$ is $e-d+1$, so we make it a polynomial  by multiplying
by $p_d^{e-d+2}$. We conclude that $R_{L,d-1}=0$ is equivalent to the condition that 
\begin{equation}\label{eqdual}
\sum_{j_1+\cdots +j_r=-d+1+i}(-1)^rq_i p_{d-j_1}\cdots p_{d-j_r}p_d^{e-d+1-r}=0.
\end{equation}
This condition is linear in the coefficients of $Q_L$, and of degree $e-d+1$ in those of $P_L$. 
It depends, as we have seen, on the choice of a preferred coordinate on $L$, in particular, 
on the choice of the line $D$ defined by this coordinate. 

Note the following behavior under rescaling:
  \begin{eqnarray}
    \label{eq:R}  
\hat R(\alpha Q(x,\lambda y),\beta P(x,\lambda y))=
\alpha\beta^{e-d+1}\lambda^{e-d+1}\hat R(Q,P).
\end{eqnarray}

\subsection{Equations for hypersurfaces with degenerate duals} 
We   apply the results of the preceeding section to the case where $Q=\det (H_{P}|_F)$,
whose degree equals $e=(k+3)(d-2)$. Recall that $F\subset W$ is a subspace of dimension $k+3$. 
Once $F$ has been chosen, we obtain a family of equations depending, up to scale, only on the 
choice of a plane $L$ in $W$ and a line $D$ in $L$. In particular, if $F$ contains $L$ 
we get an equation depending only on the partial flag $D\subset L\subset F$. This equation
must therefore be a highest weight vector in some module of polynomials on $S^nW^*$, and
its highest weight must be of the form $a\omega_1+b\omega_2+c\omega_{k+3}$.

   Consider a basis adapted to $D\subset L\subset F$ and let 
$(x,y,z,w)=(x,y,z^i,w^s)$   denote its dual basis.
Consider a diagonal matrix $T:=(t_x,t_y,t_zId_{F/L},t_wId_{W/F})$.
Under rescaling
\begin{eqnarray}
  \label{eq:P}
(T.P)(x,y,0,0)=t_x^{-n}P(x,t_xt_y^{-1}y,0,0).  
\end{eqnarray}

Moreover, the matrix of ${H_{T.P}}|_F$ is obtained from that of ${H_P}|_F$ by substituting
$(x,y,z,w)$ by \linebreak $(t_x^{-1}x,t_y^{-1}y,t_z^{-1}z,t_w^{-1}w)$ and multiplying the first row and
column by $t_x^{-1}$, the second row  and column by $t_y^{-1}$ and the other rows and columns
by $t_z^{-1}$. It follows that   $\tdet({H_{T.P}}|_F)$  is obtained from $\tdet({H_{P}}|_F)$ by substituting 
 $(t_x^{-1}x,t_y^{-1}y,t_z^{-1}z,t_w^{-1}w)$ in for $(x,y,z,w)$   and by multiplying the result by
$$
t_x^{-2}t_y^{-2}t_z^{-2(k+1)}.
$$
In summary, 
\begin{eqnarray}
  \label{eq:Q}
  \tdet({H_{T.P}}|_{F})(x,y,0,0)=t_x^{-2}t_y^{-2}t_z^{-2(k+1)}t_x^{-e}\tdet({H_{ P}}|_{F})(x,t_xt_y^{-1}y,0,0).
\end{eqnarray}
From equations \eqref{eq:R}, \eqref{eq:P} and \eqref{eq:Q},  the vector of exponents of the action of $T$ on our equation is:
$$
 \left(
  \begin{array}{c}
2+e+(d-1)(e-d+1)\\e-d+3\\2(k+1) 
  \end{array}
\right).
$$

This vector should be 
$$\left(
  \begin{array}{c}
a+b+c\\b+c\\c(k+1) 
  \end{array}
\right).
$$
We deduce  
$$
\begin{array}{l}
  a = -e + 3 d - 2 + d e - d^2 =(d-1)(d-2)(k+2), \\ 
b = e - d+1=d(k+2)-2k-5,\\ 
c = 2.
\end{array}
$$
Note that $a+2b+(k+3)c= d(d-1)(k+2)$ so this module occurs in $W^{\ot   d(d-1)(k+2)}$.

Define $Dual_{k,d,N}\subset \PP(S^dW^*)$ as the Zariski closure of the set of 
irreducible hypersurfaces of degree $d$ in $\PP W\simeq \PP^{N-1}$, whose dual variety has dimension at most $k$. 

\bigskip 
\begin{theorem}\label{degdualeqns}
The variety $Dual_{k,d,N}\subset \PP(S^n(\BC^N)^*)$ has equations given by a copy of the $SL_N$-module 
  with highest weight 
$$\Omega(k,d)= (d-1)(d-2)(k+2) \omega_1+\big( d(k+2)-2k-5\big) \omega_2+2\omega_{k+3}.$$
These equations have degree $(k+2)(d-1)$.
\end{theorem}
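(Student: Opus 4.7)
The plan is to identify the zero locus of the $GL_N$-submodule $V_\Omega$ generated by the equation $E_{D,L,F}$ constructed in the previous subsection with $Dual_{k,d,N}$. First I would record the degree statement: $\hat R$ is linear in the coefficients of $Q$ and of degree $e-d+1$ in those of $P$, and substituting $Q=\det(H_P|_F)$, which is of degree $k+3$ in $P$, gives total degree $(k+3)+(e-d+1)=(k+2)(d-1)$. The torus weight computation just completed identifies this weight as $\Omega(k,d)$, and since $E_{D,L,F}$ depends (up to scale) only on the flag $D\subset L\subset F$, which is stabilized by a parabolic subgroup of $GL_N$, it is a highest weight vector; its $GL_N$-orbit therefore spans the claimed irreducible submodule $V_\Omega$.

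Next I would show that $V_\Omega$ lies in the ideal of $Dual_{k,d,N}$. For irreducible $P\in S^dW^*$ with $\dim Z(P)^*\leq k$, Katz's formula gives $P\mid\det(H_P|_F)$ for every $F$ of dimension $k+3$; restricting to any plane $L\subset F$ yields $P_L\mid Q_L$ with $Q=\det(H_P|_F)$, so the Euclidean remainder $R_L$ vanishes and in particular $E_{D,L,F}(P)=0$ for any $D\subset L\subset F$. Since $GL_N$ acts transitively on partial flags of type $(1,2,k+3)$, the $GL_N$-orbit of $E_{D,L,F}$ spans $V_\Omega$, so every element of $V_\Omega$ vanishes on $P$. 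Zariski closure extends the vanishing to all of $Dual_{k,d,N}$.

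For the converse inclusion, suppose $P$ is irreducible and $E_{D,L,F}(P)=0$ for every flag. Fixing $L\subset F$ and letting $D$ vary inside $L$, the coefficient $R_{L,d-1}$ ranges through evaluations of $R_L\in S^{d-1}L^*$ at points of $L$, so the simultaneous vanishing forces $R_L\equiv 0$; hence $P_L\mid Q_L$ for every plane $L\subset F$, and consequently $\det(H_{P,w}|_F)=0$ for every $w\in Z(P)\cap F$. Letting $F$ vary over all $(k+3)$-dimensional subspaces through a fixed smooth point $w\in Z(P)$ gives $\det(H_{P,w}|_F)=0$ for every such $F$. Euler's identity applied to the partial derivatives of $P$ shows $w\notin\ker H_{P,w}$ at smooth points, so one can choose a complement $U$ of $\ker H_{P,w}$ in $W$ containing $w$; if $\trank H_{P,w}\geq k+3$, a generic $F\subset U$ of dimension $k+3$ through $w$ has $H_{P,w}|_F$ nondegenerate, contradicting the vanishing. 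Thus $\trank H_{P,w}\leq k+2$ at smooth points of $Z(P)$, and Katz's formula yields $\dim Z(P)^*\leq k$, so $[P]\in Dual_{k,d,N}$.

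The hardest step is this final rank-theoretic argument, in particular the interplay between varying $F$ and controlling $\trank H_{P,w}$; a subsidiary point worth checking is that $V_\Omega$ is actually nonzero, which can be verified by evaluating $E_{D,L,F}$ on a smooth hypersurface whose Hessian has maximal rank.
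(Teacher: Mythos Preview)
Your outline matches the paper's argument closely: the degree count, the identification of the highest weight via the torus computation, and the two inclusions via Katz's formula are all as in the text.  In the converse direction you take a slightly different route than the paper.  The paper allows $L$ to range over all planes in $W$ (not only $L\subset F$) and concludes directly that $P$ divides $\det(H_P|_F)$ as polynomials on $W$; you instead restrict to flags $D\subset L\subset F$, extract the pointwise vanishing $\det(H_{P,w}|_F)=0$ for $w\in Z(P)\cap F$, and then vary $F$ through a fixed smooth $w$ to bound $\operatorname{rank}H_{P,w}$.  Your route has the advantage of using only equations indexed by genuine flags, i.e.\ only the irreducible module of weight $\Omega(k,d)$, which is what the theorem literally asserts; the paper's remark after the theorem that ``the module generated by these equations should in fact be larger'' shows that its own argument implicitly uses more.

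There is one genuine slip.  The sentence ``the coefficient $R_{L,d-1}$ ranges through evaluations of $R_L\in S^{d-1}L^*$ at points of $L$'' is not correct: the remainder $R_L$ itself depends on the choice of $D$ (equivalently of the coordinate $y$), so there is no fixed degree-$(d-1)$ form on $L$ whose point-evaluations give the various leading coefficients.  What \emph{is} true---and is exactly what the paper asserts without proof---is that the family $\{E_{D,L,F}\}_{D\subset L}$ cuts out the divisibility locus $P_L\mid Q_L$ set-theoretically.  One way to see this in the spirit of your idea: under $y\mapsto y+cx$ the polynomial equation $E_D$ becomes a polynomial $\Phi(c)$ in the parameter $c$, with $\Phi(c)=P_L(1,c)^{e-d+1}\cdot R_{L,D_c,d-1}$; for instance when $e=d$ one computes $\Phi(c)=(P_L\,\partial_yQ_L-Q_L\,\partial_yP_L)(1,c)$, the Wronskian.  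One then checks that $\Phi\equiv 0$ forces $P_L\mid Q_L$.  Replacing your ``evaluations of $R_L$'' by this covariant $\Phi$ repairs the argument; the rest of your proof goes through, including the Euler-identity observation that $w\notin\ker H_{P,w}$ at smooth points and the choice of $F$ through $w$ transverse to the kernel.
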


\bigskip
Note that when we constructed our equations, we did not suppose that $L$ was contained in $F$. 
This indicates that the module generated by these equations should in fact 
be larger than the single module with highest weight $\Omega(k,d)$.

Set theoretically, these equations suffice to define $Dual_{k,d,N}$ locally, at least on the open 
subset parametrizing irreducible hypersurfaces $Z(P)\subset \PP(W)$. Indeed, once the plane 
$L$ is fixed, by varying the line $D$ one obtains a family of equations expressing
the condition that $P_L$ divides $Q_L$, respectively the restrictions to $L$ of the polynomials $P$ and 
$Q=\det (H_{P}|_F)$. But $P$ divides $Q$ if and only if    restricted to each
plane $P$ divides $Q$, so our conditions   imply that the dual variety of the irreducible hypersurface $Z(P)$ has dimension less or equal
to $k$. On the other hand, if $P$ is not reduced, then these equations can vanish even if the
dual of $P_{red}$ is non-degenerate. For example, if $P=R^2$ where
$R$ is a quadratic polynomial of rank $2s$, then $\tdet(H_P)$ is a multiple of $R^{2s}$.

\subsection{Polynomials of the form $\ell^{d-m}R$}

\begin{lemma}\label{linfaclem} Let $U=\BC^M$ and $L=\BC$, let  $R\in S^m U^*$ be  irreducible,
let $\ell\in L^*$ be nonzero,  let $U^*\op L^*\subset W^*$ be a linear inclusion,
and let $P=\ell^{d-m}R\in S^dW^*$. 

If $[R]\in Dual_{\kappa, m, M}$ and 
$[R]\not\in Dual_{\kappa-1, m, M}$, then $[P]\in Dual_{\kappa, d, N}$ and 
$[P]\not\in Dual_{\kappa-1, d, N}$.
\end{lemma}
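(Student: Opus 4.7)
The plan is to establish the two assertions separately: the non-membership $[P]\notin Dual_{\kappa-1,d,N}$ by a direct Hessian computation combined with Katz's formula and the lower semi-continuity of rank, and the membership $[P]\in Dual_{\kappa,d,N}$ by constructing an irreducible degeneration, which I expect to be the principal obstacle.

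For the non-membership, I would choose coordinates adapted to $U\oplus L\subset W$ so that $\ell=y$ and $U^*=\langle x_1,\dots,x_M\rangle$, and pick $w=(x_0,y_0,0)$ with $R(x_0)=0$ and $y_0\ne 0$. Since $P=y^{d-m}R(x)$ does not depend on the complement of $U\oplus L$ in $W$, the Hessian $H_{P,w}$ vanishes in those directions, and its restriction to $U\oplus L$ is the block matrix
\begin{equation*}
\begin{pmatrix} y_0^{d-m}\,H_{R,x_0} & (d-m)\,y_0^{d-m-1}\,\nabla R(x_0) \\ (d-m)\,y_0^{d-m-1}\,\nabla R(x_0)^T & 0\end{pmatrix}.
\end{equation*}
Differentiating Euler's identity for $R$ yields $H_{R,x_0}\,x_0=(m-1)\nabla R(x_0)$, placing the off-diagonal block in the image of the top-left block, and Euler applied to $R$ itself gives $x_0^T H_{R,x_0}\,x_0 = m(m-1)R(x_0)=0$. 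A congruence transformation using $v=\tfrac{d-m}{(m-1)y_0}\,x_0$ then reduces the matrix to $\mathrm{diag}(y_0^{d-m}H_{R,x_0},0)$, so $\trank H_{P,w}=\trank H_{R,x_0}=\kappa+2$ by Katz applied to $R$ at a generic $x_0\in Z(R)$ together with the hypothesis $[R]\notin Dual_{\kappa-1,m,M}$. Now if $[P]\in Dual_{\kappa-1,d,N}$ there would exist irreducibles $P_t\to P$ with $\dim Z(P_t)^*\le \kappa-1$, so Katz would force $\trank H_{P_t,w'}\le\kappa+1$ for every $w'\in Z(P_t)$. Because $\nabla P(w)\ne 0$, the implicit function theorem supplies $w_t\in Z(P_t)$ with $w_t\to w$, and the lower semi-continuity of matrix rank yields $\kappa+2=\trank H_{P,w}\le\liminf_t \trank H_{P_t,w_t}\le\kappa+1$, a contradiction.

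For the membership $[P]\in Dual_{\kappa,d,N}$, I would first reduce to the case where $R$ is irreducible: by definition of $Dual_{\kappa,m,M}$ as a closure, there are irreducible $R_t\in S^mU^*$ with $\dim Z(R_t)^*\le\kappa$ and $R_t\to R$, so $\ell^{d-m}R_t\to \ell^{d-m}R$, and the closedness of $Dual_{\kappa,d,N}$ lets us assume $R$ itself is irreducible. The remaining task is to exhibit a family of \emph{irreducible} degree-$d$ polynomials $Q_s\in S^dW^*$ with $\dim Z(Q_s)^*\le\kappa$ converging to $\ell^{d-m}R$, which is the main obstacle: naive perturbations either stay reducible (perturbing $\ell$ and $R$ separately) or destroy the dual-dimension bound (adding a generic element of $S^dW^*$). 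My preferred strategy is to realize $[\ell^{d-m}R]$ as a limit $\lim_{s\to 0}[g_s\cdot Q]$ for a fixed irreducible $Q\in Dual_{\kappa,d,N}$ and a one-parameter subgroup $g_s\subset GL(W)$, invoking the $GL(W)$-invariance and closedness of $Dual_{\kappa,d,N}$. Identifying such $Q$ uniformly in $R$ and producing the appropriate degeneration is the technical heart of the argument; analyzing the infinitesimal structure of $Dual_{\kappa,d,N}$ at $[\ell^{d-m}R]$ through the equations of Theorem~\ref{degdualeqns} may help supply integrable deformation directions along irreducible hypersurfaces.
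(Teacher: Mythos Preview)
Your non-membership argument is correct and in fact more careful than the paper's. The paper writes the Hessian of $P=\ell^{d-m}R$ in block-diagonal form
\[
H_P=\begin{pmatrix}\ell^{d-m}H_R&0&0\\0&c\,\ell^{d-m-2}R&0\\0&0&0\end{pmatrix},
\]
silently dropping the $(U,L)$ off-diagonal block $(d-m)\ell^{d-m-1}\nabla R$ that you include. Your Euler-based congruence, which kills this block at a smooth point $w=(x_0,y_0,0)\in Z(R)$, is exactly the missing step; both approaches then give $\operatorname{rank}H_{P,w}=\operatorname{rank}H_{R,x_0}=\kappa+2$ and conclude that the equations of $Dual_{\kappa-1,d,N}$ fail at $[P]$. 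Your semicontinuity wrap-up is equivalent to the paper's observation that some $\det_{\kappa+2}(H_P|_F)$ is not a multiple of $P$. (A small aside: the hypothesis already has $R$ irreducible, so your ``reduce to $R$ irreducible'' step is unnecessary; for irreducible $R$ the hypothesis simply says $\dim Z(R)^*=\kappa$.)

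For the membership direction the paper does \emph{not} construct an irreducible degeneration. It reads off from the block structure of $H_P$ that $\det_{\kappa+3}(H_P|_F)$ is a multiple of $P$ for every $(\kappa+3)$-plane $F$, i.e.\ that the equations of Theorem~\ref{degdualeqns} hold at $[P]$. Your worry is well founded: since $P=\ell^{d-m}R$ is reducible, satisfying those equations does not literally place $[P]$ in the Zariski closure $Dual_{\kappa,d,N}$ of irreducible hypersurfaces with small dual---the paper itself remarks after Theorem~\ref{degdualeqns} that the equations only cut out $Dual_{\kappa}$ on the irreducible locus and can hold for non-reduced $P$ outside it. So the paper's proof of the membership half is really a proof that $[P]$ lies in the zero set of the equations, and your proposed route via a one-parameter degeneration $g_s\cdot Q\to\ell^{d-m}R$ inside $GL(W)$ would be a genuinely stronger statement if you could complete it. That said, for the application to Theorem~\ref{oldcbnd} only the non-membership half is used: one combines $[\ell^{n-m}R]\notin Dual_{\kappa-1,n,n^2}$ with $\overline{GL_{n^2}\cdot[\det_n]}\subset Dual_{2n-2,n,n^2}$, so your completed non-membership argument already suffices for the paper's main purpose.
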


\begin{proof}
Choose a basis $u_1\hd u_M,v,w_{M+2}\hd w_N$ of $W$ so $(U^*)\upperp =\langle w_{M+2}\hd w_N\rangle$
and $(L^*)\upperp =\langle u_1\hd u_M, w_{M+2}\hd w_N\rangle$. Let $c=(d-m)(d-m-1)$. In these coordinates, we
have the matrix in $(M,1,N-M-1)\times (M,1,N-M-1)$-block form:
$$
H_P=
\begin{pmatrix} \ell^{d-m}H_R &0&0\\
0& c\ell^{d-m-2}R & 0\\
0&0&0
\end{pmatrix}
$$

First note that $\tdet_{M+1}(H_P|_F)$ for any $F=\BC^{M+1}$ is either zero or a multiple of $P$. If
$\tdim Z(R)^*=M-2$ (the expected dimension), then 
 for a generic  $F=\BC^{M+1}$, $\tdet_{M}(H_P|_F)$ will not be a multiple of $P$, and more
generally if 
 $\tdim Z(R)^*=\kappa$, then for a generic 
$F=\BC^{\kappa + 2}$, $\tdet_{\kappa + 2}(H_P|_F)$ will not be a multiple of $P$ but 
for any 
$F=\BC^{\kappa + 3}$, $\tdet_{\kappa + 3}(H_P|_F)$ will   be a multiple of $P$.
\end{proof}

\section{The orbit of the determinant}

\subsection{Statement of the main result}

Let $W=M_n(\CC)$, the space of complex matrices of size $n$. Its dimension is $N=n^2$. 
The hypersurface in $\PP W$ defined
by the determinant is dual to the variety of rank one matrices,   the {\it Segre
product} $\PP^{n-1}\times \PP^{n-1}\subset \PP^{N-1}$. 

Intuitively, a deformation of the determinant 
hypersurface, subject to the condition that its dual remains of dimension $2n-2$,  
should have  a deformation of the Segre as its dual variety. But the Segre is rigid,  its
only deformations in $\PP W^*$ are translates by projective automorphisms. Hence
the only deformations of the determinant hypersurface, with duals of the same 
dimension, should be translates by projective automorphisms as well. 

The problem with this intuitive argument is that the dual map can be highly 
discontinuous, especially in the presence of singularities, and the 
determinant hypersurface is very singular. Nevertheless, the conclusion turns 
out to be correct: 

{\it Every small deformation of  the determinant hypersurface, with dual variety of 
the same dimension, is a translate by a projective automorphism.
}

We will prove a more precise statement. For a polynomial $P$ of degree $n$ on $W$, 
and a $k$-dimensional subspace $F$ of $W$, we have expressed the condition that 
$P$ divides $\det (H_P|_F)$  
in terms of  polynomial equations of degree $(k+2)(n-1)$. 
These equations define a subscheme ${\mathcal Dual}_{k,n,N}\subset \BP S^nW^*$,
supported on the variety $Dual_{k,n,N}$  and possibly  other reducible hypersurfaces.

\bigskip 
\begin{theorem}\label{smooth}
The scheme ${\mathcal Dual}_{2n-2,n,n^2}$ is smooth at $[\det_n]$, and the 
$PGL_{n^2}$-orbit closure of $[\det_n]$ is an irreducible component of ${\mathcal Dual}_{2n-2,n,n^2}$. 
\end{theorem}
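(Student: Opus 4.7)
The plan is to deduce both claims at once by showing that the Zariski tangent space $T_{[\tdet_n]}\Dual_{2n-2,n,n^2}$ has the same dimension as the orbit closure $\ol{GL_{n^2}\cdot[\tdet_n]}$. Since the orbit closure is irreducible and sits inside $\Dual_{2n-2,n,n^2}$, an equality of tangent space dimensions at $[\tdet_n]$ simultaneously forces the scheme to be smooth of that dimension at $[\tdet_n]$ and identifies the orbit closure as the unique irreducible component of $\Dual_{2n-2,n,n^2}$ passing through this point.

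First I would compute the orbit tangent space. The stabilizer of $\tdet_n\in S^nW^*$ in $GL_{n^2}$ is, up to the outer transposition, the subgroup of pairs $(A,B)\in GL_n\times GL_n$ acting by $M\mapsto AMB$ with $\det(A)\det(B)=1$, of dimension $2n^2-1$. Hence $\mathfrak{gl}_{n^2}\cdot\tdet_n\subset S^nW^*$ has dimension $n^4-2n^2+1$; it contains $\BC\tdet_n$ through the scalar action, and so $T_{[\tdet_n]}\ol{GL_{n^2}\cdot[\tdet_n]}=(\mathfrak{gl}_{n^2}\cdot\tdet_n)/\BC\tdet_n$ has dimension $n^4-2n^2$.

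Next I would linearize the defining equations of Theorem \ref{degdualeqns}. Since $\dim Z(\tdet_n)^*=2n-2$, for every subspace $F=\BC^{2n+1}\subset W$ there is a polynomial $C_0(F)$ of degree $2n^2-4n-2$ with $\tdet(H_{\tdet_n}|_F)=\tdet_n\cdot C_0(F)$. For $P=\tdet_n+\varepsilon Q$,
\begin{equation*}
\tdet(H_P|_F)=\tdet_n\cdot C_0(F)+\varepsilon\,\mathrm{tr}\!\bigl(\mathrm{adj}(H_{\tdet_n}|_F)\cdot H_Q|_F\bigr)+O(\varepsilon^2),
\end{equation*}
so the divisibility $P\mid\tdet(H_P|_F)$ translates, to first order in $\varepsilon$, into the congruence
\begin{equation*}
\mathrm{tr}\!\bigl(\mathrm{adj}(H_{\tdet_n}|_F)\cdot H_Q|_F\bigr)\equiv Q\cdot C_0(F)\pmod{\tdet_n}
\end{equation*}
for every such $F$. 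Imposing this congruence for all $F$ cuts out $T_{[\tdet_n]}\Dual_{2n-2,n,n^2}\subset S^nW^*/\BC\tdet_n$, and the inclusion $(\mathfrak{gl}_{n^2}\cdot\tdet_n)/\BC\tdet_n\subseteq T_{[\tdet_n]}\Dual_{2n-2,n,n^2}$ follows from $\ol{GL_{n^2}\cdot[\tdet_n]}\subset\Dual_{2n-2,n,n^2}$.

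The remaining and hardest step is the reverse inclusion. Both tangent spaces are stable under the stabilizer $H=(GL_n\times GL_n)\rtimes\mathbb{Z}/2\mathbb{Z}$ of $[\tdet_n]$, so it suffices to match them $H$-isotypic component by $H$-isotypic component inside $S^nW^*/\BC\tdet_n$. Concretely I would decompose $S^nW^*$ as an $H$-module via Cauchy's formula, identify precisely which $H$-isotypic pieces contribute to $(\mathfrak{gl}_{n^2}\cdot\tdet_n)/\BC\tdet_n$, and for each remaining piece construct, for an arbitrary nonzero $Q$ in it, an explicit witness subspace $F=\BC^{2n+1}$ for which the infinitesimal congruence above fails. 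The key technical input is that on a generic matrix $M\in Z(\tdet_n)$ (that is, a rank $n-1$ matrix) the Hessian $H_{\tdet_n}(M)$ has rank exactly $2n$, which gives precise control over $\mathrm{adj}(H_{\tdet_n}|_F)$ modulo $\tdet_n$ for adequately chosen $F$. The conceptual content being extracted is the rigidity of the Segre $\BP^{n-1}\times\BP^{n-1}\subset\BP W^*$; but because $Z(\tdet_n)$ is highly singular, one cannot simply invoke a normal bundle computation for the Segre and must instead extract the rigidity from infinitesimal Hessian data directly.
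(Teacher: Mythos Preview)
Your overall strategy matches the paper's exactly: linearize the divisibility condition $P\mid\det(H_P|_F)$, use the $(GL_n\times GL_n)$-stability of the Zariski tangent space together with the multiplicity-free Cauchy decomposition $S^n(A^*\otimes B)^*=\bigoplus_\lambda S_\lambda A\otimes S_\lambda B^*$, and show that only the components for $\lambda=1^n$ and $\lambda=21^{n-2}$ survive. Your linearized congruence is equivalent to the paper's equation obtained by differentiating $\det(H_P|_F)=PQ_F$.

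The gap is that you stop precisely where the work begins. Saying ``for each remaining isotypic piece construct an explicit witness $F$ for which the congruence fails'' is not a proof: there are $p(n)-2$ such pieces and you give no mechanism for handling them uniformly. The paper's solution has two ingredients you are missing. First, because the Cauchy decomposition is multiplicity free, it suffices to test one vector in each component, and the immanant $IM_\lambda$ is a canonical such vector. Second, rather than hunting for a witness $F$, the paper evaluates the linearized condition at a rank-$(n-1)$ matrix $w$ and a vector $X$ in the kernel of $H_{\det_n,w}$; this collapses the condition to $H_{\pi,w}(X)=c_{X,w}\pi(w)$, and for $\pi=IM_\lambda$ with specific choices of $w$ and $X$ this becomes a four-term identity on the character $\chi_\lambda$, namely $\sum_{\tau\in\langle(ip),(qn)\rangle}\chi_\lambda(\sigma\tau)=0$. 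An induction on cycle type then shows that the space of class functions satisfying all such identities is two-dimensional, spanned by $\chi_{1^n}$ and $\chi_{21^{n-2}}$. This character-theoretic reduction is the real content of the proof, and it is absent from your outline.

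A minor point: your dimension count is off by one. The map $GL_n\times GL_n\to GL_{n^2}$ has a one-dimensional kernel $\{(\lambda I,\lambda^{-1}I)\}$, so the stabilizer of $\det_n$ inside $GL_{n^2}$ has dimension $2n^2-2$, not $2n^2-1$. This does not affect the argument, since the proof establishes equality of tangent spaces directly rather than via a dimension comparison.
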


\bigskip 
In particular, Theorem \ref{smooth} implies that the $SL(W)$-module of highest weight $ \Omega(2n-2,n) $ given by  ~\eqref{eqdual}
gives  local equations at $[\det_n]$
of $\ol{GL_{n^2}\cdot [\tdet_n]}$, of degree $2n(n-1)$. Since  $Dual_{k,n,N}$ always contains 
the variety of degree $n$ hypersurfaces which are cones over a linear space of dimension 
$N-k-1$, the zero set of the equations is strictly larger than $\ol{GL_{n^2}\cdot [\tdet_n]}$. The so-called {\it subspace variety}
of cones has dimension $\binom{k+n+1}n+(k+2)(N-k-2)-1$. 
For $k=2n-2, N=n^2$, this dimension is bigger than the dimension of the orbit
of $[\det_n]$, and therefore $Dual_{2n-2,n,n^2}$ is not   irreducible. 
We have not yet  been able to find equations that separate the orbit
of $[\det_n]$ from the other components of $Dual_{2n-2,n,n^2}$.

\subsection{Consequences regarding Kronecker coefficients}
A copy of the module with highest weight $n(n-1)(n-2)\o_1+ (2n^2-4n-1)\o_2 + 2\o_{2n+1}$
in $S^{2n(n-1)}(S^n\BC^{n^2})$ is in the ideal of $\oldet$. 

The program suggested in \cite{MS2}   was to separate the determinant and permanent by finding $SL(W)$-modules
in the ideal of $\ol{GL_{n^2}\cdot [\tdet_n]}$ such that their entire isotypic component
was in the ideal. (Also see \cite{BLMW} for explicit statements
regarding Kronecker coefficients needed to carry out the program.) 
This does not occur for the module with highest weight $n(n-1)(n-2)\o_1+ (2n^2-4n-1)\o_2 + 2\o_{2n+1}$.

For example, when  $n=3$,  the module with highest weight $12\o_1 + 5\o_2+2\o_7$ occurs
with multiplicity six in $S^{12}(S^3\BC^9)$, but only one copy of it is in the ideal.

\subsection{Computing the Zariski tangent space}\label{sec32}

We   differentiate the condition that $P$ divides $\det (H_P|_F)$ for each $F$. 
That is,   write   $\det (H_P|_F)=PQ_F$ for some polynomial $Q_F$, and 
consider a curve $P_{\ep}=P+\ep \pi +\ep^2 \t + O(\ep^3)$, inducing a curve $Q_{F,\ep}=Q_F+\ep Q_F'+O(\ep^2)$.
  Up to $O(\ep^2)$,  $H_P$ becomes $H_P+\epsilon H_\pi$ and we deduce the identity
\begin{equation}\label{Zar}
\det(H_P,\ldots ,H_P,H_\pi)|_F=\pi Q_F+P Q_F'.
\end{equation}

To exploit \eqref{Zar}, let $[w]$ be a general 
point of the hypersurface $Z(P)$, so   the rank of the quadratic form $H_{P,w}$ 
 is exactly $k+2$. Let $X$ belong to the kernel of $H_{P,w}$. Let $F'$ be a 
$(k+2)$-dimensional subspace of $W$, transverse to the kernel of $H_{P,w}$, and 
let $F=F'\oplus {\BC} X$. Now compute $\det(H_P,\ldots ,H_P,H_\pi)|_F$
at $w$. In terms of bases adapted to the flag $F'\subset F\subset W$, the matrix of $H_{P,w}$ has zeros in its last row 
and column, since they correspond to $X$, which belongs to the kernel. Removing this
row and column yields an invertible matrix,   corresponding to $H_{P,w}|_{F'}$, as 
  $F'$ is transverse to the kernel. 

Now, $\det(H_P,\ldots ,H_P,H_\pi)|_F$ evaluated at $w$ is the sum of the $k+3$ determinants
obtained by considering the matrix of $H_{P,w}|_F$ and replacing one column by the
corresponding column of $H_{\pi,w}|_F$. If this column is not the last one,
this determinant remains with a zero column, hence equals zero. In case the replaced
column is the last one, since the last row of the matrix of $H_{P,w}|_F$ vanishes, 
the resulting determinant is equal to the determinant of the upper-left block, 
$\det (H_{P,w}|_{F'})$, multiplied by the lower-right entry of $H_{\pi,w}|_F$, that is,
  $H_{\pi,w}(X)=\pi(w\hd w, X,X)$.
 Equation (\ref{Zar}) becomes 
\be\label{Zarb}
\det (H_{P,w}|_{F'}) H_{\pi,w}(X)=\pi(w)Q_F(w).
\ene
Note that $Q_F(w)$   depends on both $w$ and $X$ (since $F$ depends on $X$), but
$\det (H_{P,w}|_{F'})$   only depends on $w$. 

Now specialize \eqref{Zarb}  to the case   $P=\det_n$. Then 
$w$ must be a matrix of rank exactly $n-1$. Write $W=E\ot E^*$, and
as such, it is naturally self-dual via the involution $e\ot \phi\mapsto \phi\ot e$. For
$w\in W$, write $w^*\in E^*\ot E=W^*$ for the image of $w$ under the involution.

\begin{lemma}\label{ker}
Let $w$   be a matrix of rank exactly $n-1$. Then the singular locus of the quadratic form $H_{\det_n,w}$,
$(H_{\det_n,w})_{sing}$
is the space   of $n\times n$  matrices $X$ such that:
$$ 1)\; Im(X)\subset Im(w), \quad 2)\; Ker(X)\supset Ker(w), \quad 3)\; w^*(X)=0.$$
\end{lemma}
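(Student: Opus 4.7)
The plan is a two-step reduction: first establish the statement at the canonical rank-$(n{-}1)$ matrix $w_0 := \operatorname{diag}(1,\ldots,1,0)$ by direct computation, then extend to every rank-$(n{-}1)$ matrix using the action $(g,h)\cdot w = g w h^{-1}$ of $GL_n\times GL_n$, which is transitive on the rank-$(n{-}1)$ stratum and under which $\tdet_n$ is a relative invariant of character $\det(g)\det(h)^{-1}$.

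At $w_0$ I would decompose $X$ in the matching block form $X = \left(\begin{smallmatrix} A & B \\ C & D \end{smallmatrix}\right)$ with $A$ an $(n{-}1)\times(n{-}1)$ block, $B$ an $(n{-}1)$-column, $C$ an $(n{-}1)$-row, and $D$ a scalar. The Schur-complement identity
$$\tdet_n(w_0+tX) = \det(I_{n-1}+tA)\bigl(tD - t^2 C(I_{n-1}+tA)^{-1}B\bigr) = tD + t^2\bigl(D\operatorname{tr}(A)-CB\bigr) + O(t^3)$$
identifies the Hessian quadratic form (up to a factor of $2$) as $Q(X) = D\operatorname{tr}(A) - CB$. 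Polarizing and imposing $Q(X,Y) = 0$ for every $Y$ yields the four scalar conditions $B=0$, $C=0$, $D=0$, $\operatorname{tr}(A)=0$. These translate verbatim into the three intrinsic conditions at $w_0$: $C=D=0$ is $\operatorname{Im}(X) \subset \operatorname{Im}(w_0)$; $B=D=0$ is $\ker X \supset \ker w_0$; and $\operatorname{tr}(A) = \operatorname{tr}(w_0 X) = w_0^*(X)$ under the self-duality $W = E \ot E^* \cong E^* \ot E = W^*$. So the lemma holds at $w_0$.

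For a general rank-$(n{-}1)$ matrix $w = g w_0 h^{-1}$, equivariance of the Hessian gives $\ker H_{\tdet_n, w} = g(\ker H_{\tdet_n, w_0}) h^{-1}$. Conditions (1) and (2) transport transparently under the action, so the lemma propagates once one verifies that condition (3) is transported compatibly by the involution $e \ot \phi \mapsto \phi \ot e$.

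The step I expect to be the main obstacle is precisely this last equivariance check: the bare pairing $w^*(X) = \operatorname{tr}(wX)$ is invariant only under the diagonal conjugation subgroup, not under the full $GL_n \times GL_n$-action, so one must either recognize that, restricted to the subspace cut out by (1) and (2), the functional $X \mapsto w^*(X)$ coincides with the intrinsic trace of the induced endomorphism $\bar w^{-1}\bar X\colon E/\ker w \to E/\ker w$, or verify directly that the self-duality $w \mapsto w^*$ intertwines correctly with the group action. Everything else is routine bookkeeping from the canonical computation.
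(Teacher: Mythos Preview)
Your two-step plan---normalize to $w_0=\operatorname{diag}(1,\dots,1,0)$ and then transport by the $GL_n\times GL_n$-action---is exactly the paper's strategy, phrased in group-theoretic language rather than as ``choose bases adapted to $w$''. The Schur-complement computation at $w_0$ agrees with the paper's polarization computation, and conditions (1) and (2) transport without trouble.

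You are right to single out condition (3) as the delicate step, and your caution is well placed: neither of your proposed fixes can be made to work, because the statement you are asked to prove is not literally correct. With the paper's definition one has $w^*(X)=\operatorname{tr}(wX)$. Take $n=3$ and $w=\left(\begin{smallmatrix}1&1&0\\0&1&0\\0&0&0\end{smallmatrix}\right)$; the $t^2$-coefficient of $\det(w+tX)$ is $i(a+e-d)+f(g-h)-cg$, so the singular locus is $c=f=g=h=i=0$ together with $a+e-d=0$. On that subspace $\operatorname{tr}(wX)=a+d+e$, which is a \emph{different} linear functional. Thus $X\mapsto w^*(X)$ does not coincide with (nor is it proportional to) $\operatorname{tr}(\bar w^{-1}\bar X)=a-d+e$ on the locus cut out by (1)--(2), and the involution $w\mapsto w^*$ does not intertwine with the full $GL_n\times GL_n$-action. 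The paper's own proof glosses over exactly this point: its assertion that ``$\sum_{i<n}x_{ii}=0$ is the condition $w^*(X)=0$'' tacitly assumes the bases $(e_i)$ and $(\phi_j)$ are mutually dual, which cannot be arranged for a general rank-$(n{-}1)$ endomorphism.

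What is genuinely true---and all that the paper uses downstream---is the description in adapted coordinates: $x_{in}=x_{ni}=0$ and $\sum_{i<n}x_{ii}=0$. The correct $GL_n\times GL_n$-equivariant reformulation of the third condition is precisely your $\operatorname{tr}(\bar w^{-1}\bar X)=0$; replacing (3) by that condition makes your transport argument go through cleanly and yields the version of the lemma actually needed later.
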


\proof Write $w=\phi_1\otimes e_1+\cdots +\phi_{n-1}\otimes e_{n-1}$, for some 
collection $e_1,\ldots, e_{n-1}$ of independent vectors in $E=\CC^n$, and some collection 
$\phi_1, \ldots ,\phi_{n-1}$ of independent linear forms. We complete these collections
into bases by adding a vector $e_n$ and a linear form $\phi_n$. Consider an  
endomorphism $X=\sum_{1\leq i, j\leq n}x_{ij}\phi_i\otimes e_j$.
An easy computation yields
$$H_{\det_n,w}(X)=\det(w,\ldots ,w,X,X)=\sum_{i=1}^{n-1}(x_{nn}x_{ii}-x_{ni}x_{in}).$$
This implies  that the singular locus of the quadratic form $H_{\det_n,w}$ is defined by the conditions 
$x_{ni}=x_{in}=0$ for $1\le i\le n$, and $\sum_{i=1}^{n-1}x_{ii}=0$. The first 
identities are equivalent to the conditions   $Im(X)\subset Im(w)$ and $Ker(X)\supset Ker(w)$. 
The third one is the condition $w^*(X)=0$. 
\qed

\medskip
We   summarize our analysis:

\begin{lemma}\label{zar}
Suppose that   $\pi\in S^nW^*$ belongs to the affine Zariski tangent space
$\hat{T}_{[det_n]}{\mathcal Dual}_{2n-2,n,n^2}$. Then for any matrix $w$ of rank $n-1$,
and any $X\in (H_{\det_n,w})_{sing}$,  
$$H_{\pi,w}(X)=c_{X,w}\pi(w),$$
for some scalar $c_{X,w}$ that does not depend on $\pi$. 
\end{lemma}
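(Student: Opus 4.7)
The plan is to read the lemma off the first-order identity \eqref{Zarb} derived in Section~\ref{sec32}, combined with the kernel description of Lemma~\ref{ker}. The lemma is essentially a clean repackaging of that identity after specializing $P=\det_n$.

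First, I would unpack membership in $\hat T_{[\det_n]}\mathcal{Dual}_{2n-2,n,n^2}$. Because the scheme is cut out by the equations of Section~2 expressing the divisibility of $\det(H_P|_F)$ by $P$ on every $(2n+1)$-plane $F\subset W$, the tangent condition along $P_\epsilon=\det_n+\epsilon\pi$ forces the first-order identity \eqref{Zar},
\[
\det(H_{\det_n},\ldots,H_{\det_n},H_\pi)|_F=\pi\,Q_F+\det_n\cdot Q_F',
\]
for every such $F$, where $\det(H_{\det_n}|_F)=\det_n\cdot Q_F$ and $Q_F$ is \emph{intrinsic to $\det_n$ and $F$}, independent of $\pi$. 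Now fix a matrix $w$ of rank $n-1$ and an element $X\in(H_{\det_n,w})_{sing}$. By the computation in Lemma~\ref{ker}, $H_{\det_n,w}$ has rank exactly $2n=(2n-2)+2$ and $(H_{\det_n,w})_{sing}$ is its radical, of codimension $2n$ in $W$. Choose a $2n$-dimensional complement $F'$ to this radical and set $F=F'\oplus\BC X$, a $(2n+1)$-plane containing $X$. Evaluating \eqref{Zarb} at $w$ (and using $\det_n(w)=0$) yields
\[
\det(H_{\det_n,w}|_{F'})\,H_{\pi,w}(X)=\pi(w)\,Q_F(w).
\]
Transversality of $F'$ to the radical makes $H_{\det_n,w}|_{F'}$ non-degenerate, so its determinant is a nonzero scalar depending only on $w$ and $F'$; setting
\[
c_{X,w}:=\frac{Q_F(w)}{\det(H_{\det_n,w}|_{F'})}
\]
gives the desired scalar, which depends only on $w$ and $X$ (through the auxiliary $F'$) and not on $\pi$, since $Q_F$ is built from $\det_n$ alone.

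The only point requiring a moment's thought is that $c_{X,w}$ a priori appears to depend on the chosen complement $F'$. Since the factor $H_{\pi,w}(X)$ on the left of \eqref{Zarb} is manifestly independent of $F'$, and since the tangent space contains vectors $\pi$ with $\pi(w)\neq 0$ (any direction coming from $GL_{n^2}$-translation works, because the orbit $GL_{n^2}\cdot[\det_n]$ lies inside the scheme), the ratio must in fact be $F'$-independent. In any event the statement only asks that $c_{X,w}$ be independent of $\pi$, so fixing one $F'$ per pair $(w,X)$ is enough. I do not foresee any serious obstacle: the whole argument is bookkeeping around \eqref{Zarb}, with the substantive work already carried out in Section~\ref{sec32} and Lemma~\ref{ker}.
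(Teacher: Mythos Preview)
Your proposal is correct and follows exactly the paper's own argument: the lemma is simply the specialization of identity \eqref{Zarb} to $P=\det_n$, with $c_{X,w}=Q_F(w)/\det(H_{\det_n,w}|_{F'})$ independent of $\pi$ because $Q_F$ is determined by $\det_n$ alone. Your additional remark about the apparent $F'$-dependence is a nice clarification, but as you note it is irrelevant to the statement, which only demands $\pi$-independence.
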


\subsection{Immanants}
Recall that each partition $\lambda$ of $n$ defines an irreducible representation 
$[\lambda]$ of the symmetric group ${\mathfrak S}_n$, hence a character $\chi_{\lambda}$. 
The immanant $IM_\lambda$ is the degree $n$ polynomial on $M_n$ defined by the formula
$$IM_\lambda(X)=\sum_{\sigma\in {\mathfrak S}_n}\chi_{\lambda}(\sigma)
x_{1\sigma(1)}\cdots x_{n\sigma(n)}.$$
For example, $[n]$ is the trivial representation and $IM_{(n)}$ is the permanent; $[1^n]$ 
is the sign representation and $IM_{(1^n)}$ is the determinant. 

  Write $M_n(\BC)=A^*\otimes B$ for two copies $A,B$ of $\CC^n$. Since $[\det_n]$ is preserved
by the action of $GL(A)\times GL(B)$ by left-right multiplication, this is also the case
of the Zariski tangent space at $[\det_n]$ of the $GL_{n^2}$-invariant scheme
${\mathcal Dual}_{2n-2,n,n^2}$. But as a $GL(A)\times GL(B)$-module, 
$$S^n(A^*\ot B)^* = \bigoplus_{\lambda}S_\lambda A\otimes S_\lambda B^*,$$
where the sum is over all partitions of $n$. Since this decomposition is multiplicity
free, the submodule $\hat{T}_{[det_n]}{\mathcal Dual}_{2n-2,n,n^2}$ must be the direct 
sum of {\it some} of the components:
$$\hat{T}_{[det_n]}{\mathcal Dual}_{2n-2,n,n^2}
= \bigoplus_{\lambda \in P_n}S_\lambda A\otimes S_\lambda B^*,$$
for some set of partitions $P_n$ to be determined. 
Note that $IM_\lambda$ is contained in the component  
$S_\lambda A\otimes S_\lambda B^*$. Therefore  $\lambda$ belongs to $P_n$ if and
only if $IM_\lambda$ belongs to $\hat{T}_{[det_n]}{\mathcal Dual}_{2n-2,n,n^2}$. 

We   apply Lemma \ref{zar} as follows. Start with a matrix $w$ of rank $n-1$, which we
write as $\sum_{i=1}^ne_i^*\otimes c_i$. There is a dependence relation between $c_1,\ldots ,c_n$,
which we can suppose to be of the form $c_n=\sum_{i=1}^{n-1}\mu_ic_i$. Then 
$w= \sum_{i=1}^{n-1}(e_i^*+\mu_ie_n^*)\otimes c_i$. By Lemma \ref{ker}, $(H_{\det_n,w})_{sing}$ can then
be described as the set of all
$$X=\sum_{i=1}^{n-1}(e_i^*+\mu_ie_n^*)\otimes (\sum_{j=1}^{n-1}\zeta_i^jc_j),$$
where $\sum_{i=1}^{n-1}\zeta_i^i=0$. In bases,   the first $n-1$ columns
$c'_1,\ldots ,c'_{n-1}$ of $X$ are linear combinations of the columns of $w$, and  
$c'_n=\sum_{i=1}^{n-1}\mu_ic'_i$ is then given by the same linear combination as for the last column of $w$. 
We can thus write the entries of $X$ as
$$x_i^k=\sum_{j=1}^{n-1}\zeta_i^jw_j^k, \; i<n, \qquad x_n^k=\sum_{i=1}^{n-1}\mu_ix_i^k.$$
Substituting these expressions into  $H_{IM_\lambda,w}(X)=IM_\lambda(w,\ldots ,w,X,X)$ yields
 a polynomial $IM_\lambda(\zeta, w',\mu)$ which is quadratic in the $\zeta_i^j$ and of degree $n$
in the coefficients $w_j^k$, $j<n$, of the first $n-1$ columns of $w$, denoted by $w'$. 
Explicitly, 
$$\begin{array}{rcl}
H_{IM_\lambda,w}(X) &=& IM_\lambda(\zeta, w',\mu)\\
  & = & \sum_{i<j}\sum_{p,q}\zeta_i^p\zeta_j^q\Big(\sum_{k,\sigma}\mu_k
\chi_\lambda (\sigma)w_1^{\sigma(1)}\cdots w_p^{\sigma(i)}\cdots w_q^{\sigma(j)}\cdots 
w_{n-1}^{\sigma(n-1)}  w_k^{\sigma(n)}\Big) \\
 & & +\sum_{i,j}\sum_{p,q}\zeta_i^p\zeta_j^q
\Big(\sum_{\sigma}\mu_j
\chi_\lambda (\sigma)w_1^{\sigma(1)}\cdots w_p^{\sigma(i)}\cdots w_j^{\sigma(j)}\cdots 
w_{n-1}^{\sigma(n-1)}  w_q^{\sigma(n)}\Big).
\end{array}$$
On the other hand,   expressing the last column of $w$ in terms of the first ones, $IM_\lambda(w)$
becomes a polynomial $IM_\lambda(w',\mu)$, of degree $n$ in $w'$:
$$IM_{\l}(w)= IM_\lambda(w',\mu)=\sum_{k,\sigma}\mu_k\chi_\lambda (\sigma)w_1^{\sigma(1)}\cdots w_k^{\sigma(k)}
\cdots w_{n-1}^{\sigma(n-1)}  w_k^{\sigma(n)}.$$
 By Lemma \ref{zar},
  for each choice of $\mu$, the vanishing of $IM_\lambda(w',\mu)$ implies the 
vanishing of $IM_\lambda(\zeta, w',\mu)$. Since they are both homogeneous of degree $m$ in $w'$,
they must be proportional. 

This gives many relations, one for each quadratic monomial in the $\zeta$'s (but recall the 
relation $\sum_{i=1}^{n-1}\zeta_i^i=0$). We will need only a small subset of them:

\begin{lemma}\label{char}
Suppose that $IM_\lambda$ belongs to $\hat{T}_{[det_n]}{\mathcal Dual}_{2n-2,n,n^2}$. 
Then for any permutation $\sigma$, and any triple of distinct integers $i,p,q$ smaller than $n$,
one has the relations
$$\sum_{\tau\in \langle (ip),(qn)\rangle}\chi_\lambda(\sigma\tau)=0.$$
\end{lemma}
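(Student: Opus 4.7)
The plan is to apply Lemma \ref{zar} to $\pi = IM_\lambda$ and then extract a judiciously chosen monomial from the resulting identity $IM_\lambda(\zeta, w', \mu) = c_{X,w}\, IM_\lambda(w', \mu)$. The preamble to Lemma \ref{char} already notes that for each fixed $\zeta$ (with $\sum \zeta_i^i = 0$) and each fixed $\mu$, the two sides are proportional as polynomials in $w'$. A degree count in $\mu$ --- both sides are linear in $\mu$ by inspection of the explicit formulas --- then shows the proportionality constant is also independent of $\mu$, so $c = c(\zeta)$ is a quadratic form in $\zeta$ alone. Extracting the coefficient of any quadratic $\zeta$-monomial $m_\zeta$ yields
\[
[m_\zeta]\, IM_\lambda(\zeta, w', \mu) \;=\; c_{m_\zeta} \, IM_\lambda(w', \mu),
\]
with $c_{m_\zeta}$ a scalar independent of $\lambda$ and of $(w', \mu)$.

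For the lemma itself, fix distinct $i,p,q<n$ (WLOG $i<q$) and take $m_\zeta := \zeta_i^p \zeta_q^q$. From the explicit expansion preceding Lemma \ref{char}, $[m_\zeta]\, IM_\lambda(\zeta, w', \mu)$ is the sum $A^{iq}_{pq} + B^{iq}_{pq} + B^{qi}_{qp}$, where $A$ collects the contribution from the first sum (carrying $\mu_{k'}$ under a further sum over $k'$) and the two $B$ terms come from the two matchings in the second sum (carrying $\mu_q$ and $\mu_i$ respectively). For an arbitrary $\sigma \in \mathfrak{S}_n$, consider the $(w',\mu)$-monomial
\[
M \;:=\; \mu_q \cdot w_p^{\sigma(i)} w_p^{\sigma(p)} \cdot w_q^{\sigma(q)} w_q^{\sigma(n)} \cdot \prod_{\substack{c<n \\ c \neq i,p,q}} w_c^{\sigma(c)},
\]
in which columns $p$ and $q$ are both doubled and column $i$ is absent. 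Every monomial of $IM_\lambda(w',\mu)$ has exactly one doubled column, whose index matches its $\mu$-factor; therefore $M$ is not a monomial of $IM_\lambda(w',\mu)$, so $[M]([m_\zeta]\, IM_\lambda(\zeta, w', \mu)) = c_{m_\zeta}\,[M]\,IM_\lambda(w',\mu) = 0$.

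Finally, I would compute this coefficient directly. The term $B^{qi}_{qp}$ contributes zero since its $\mu_i$-factor cannot match $\mu_q$. In $A^{iq}_{pq}$ only the summand with $k' = q$ matches, and the permutations $\sigma'$ in $A^{iq}_{pq}|_{k'=q}$ that produce $M$ are precisely those satisfying $\sigma'(c) = \sigma(c)$ for $c \notin \{i,p,q,n\}$, $\{\sigma'(i), \sigma'(p)\} = \{\sigma(i), \sigma(p)\}$, and $\{\sigma'(q), \sigma'(n)\} = \{\sigma(q), \sigma(n)\}$; these are exactly $\sigma, \sigma(ip), \sigma(qn), \sigma(ip)(qn)$. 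The same row analysis applied to $B^{iq}_{pq}$ (which already carries $\mu_q$) yields the same four permutations. Adding gives
\[
[M]([m_\zeta]\, IM_\lambda(\zeta, w', \mu)) \;=\; 2\sum_{\tau \in \langle (ip),(qn)\rangle}\chi_\lambda(\sigma\tau),
\]
and setting this to zero proves the lemma. The main obstacle is this final combinatorial accounting: one must verify that each of $A^{iq}_{pq}|_{k'=q}$ and $B^{iq}_{pq}$ yields exactly the four permutations in $\sigma\langle(ip),(qn)\rangle$, with no further contributions from $A$, $B$ to this particular $M$.
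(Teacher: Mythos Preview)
Your overall strategy is the same as the paper's, but you make a different choice of $\zeta$-monomial, and that choice introduces a genuine gap.

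The paper extracts the coefficient of $\zeta_i^p\zeta_i^q$ (same lower index, both \emph{off-diagonal} since $p\neq i$ and $q\neq i$). With that choice the first sum $A$ contributes nothing at all (its two $\zeta$'s have distinct lower indices), and only the $B$-terms with $\mu_i$ survive; every resulting $(w',\mu)$-monomial has two doubled columns and hence does not occur in $IM_\lambda(w',\mu)$, so the whole coefficient vanishes and the four-term relation drops out immediately. Crucially, because both factors of the paper's $m_\zeta$ are off-diagonal, the trace constraint $\sum_{l<n}\zeta_l^l=0$ is irrelevant: any element of the ideal $(\sum_l\zeta_l^l)$ has zero $\zeta_i^p\zeta_i^q$-coefficient, so the identity $[m_\zeta]\,IM_\lambda(\zeta,w',\mu)=c_{m_\zeta}\,IM_\lambda(w',\mu)$ is literally true.

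Your choice $m_\zeta=\zeta_i^p\zeta_q^q$ involves the diagonal entry $\zeta_q^q$, and this is where your argument breaks. The proportionality $IM_\lambda(\zeta,w',\mu)=c(\zeta)\,IM_\lambda(w',\mu)$ is only asserted for $\zeta$ on the hyperplane $\sum_l\zeta_l^l=0$; as a polynomial identity in the free variables $\zeta_a^b$ it holds only modulo that linear form. Writing $IM_\lambda(\zeta,w',\mu)-\tilde c(\zeta)\,IM_\lambda(w',\mu)=(\sum_l\zeta_l^l)L(\zeta,w',\mu)$ for some linear $L$, one finds
\[
[\zeta_i^p\zeta_q^q]\,IM_\lambda(\zeta,w',\mu)\;=\;(\text{scalar})\cdot IM_\lambda(w',\mu)\;+\;L_i^p(w',\mu),
\]
where $L_i^p$ is the $\zeta_i^p$-coefficient of $L$. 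So your displayed identity $[m_\zeta]\,IM_\lambda(\zeta,w',\mu)=c_{m_\zeta}\,IM_\lambda(w',\mu)$ is not justified for this $m_\zeta$: there is an extra term, and you give no reason why $[M]L_i^p$ should vanish. Your final combinatorial count is fine, but the step that allows you to set the result equal to zero is exactly the step that fails. The simplest fix is to switch to the paper's off-diagonal monomial $\zeta_i^p\zeta_i^q$, which both eliminates the constraint issue and kills the $A$-sum, making the bookkeeping much shorter.
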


Here   $\langle (ip),(qn)\rangle$ denotes the group of permutations generated by the two simple 
transpositions $(ip)$ and $(qn)$. This group has order four, hence we get a collection of 
four term relations between the values of the character $\chi_\lambda$. Observe also 
that since the characters are class functions,   $ipqn$ can be replaced by any four-tuple of
distinct integers. 

\proof Consider the coefficient of $\zeta_i^p\zeta_i^q$ in  $IM_\lambda(\zeta, w',\mu)$. 
It is  
$$\sum_{\sigma}\mu_i
\chi_\lambda (\sigma)w_1^{\sigma(1)}\cdots w_p^{\sigma(i)}\cdots w_p^{\sigma(p)}
\cdots w_i^{\sigma(i)}\cdots  w_q^{\sigma(q)}\cdots  w_q^{\sigma(n)}.$$
The monomials in that sum do not appear in  $IM_\lambda(w',\mu)$, so this sum must be zero. 
Our condition is then just that the coefficient of each monomial is equal to zero, since the 
monomial to which a permutation $\sigma$ contributes does not change when we compose it on the right
with some element of $\langle (ip),(qn)\rangle$. \qed 

\medskip
We conclude:\medskip

\begin{prop} $P_n=\{1^n, 21^{n-2}\}.$ \end{prop}

\proof We know that both partitions are contained in $P_n$, since the first one corresponds
to the determinant itself, and the second one to the tangent space to the orbit of its
projectivization. Therefore, by Lemma \ref{char}, 
it is enough to check that the vector space $C_n$ of class functions
$F$ on ${\mathfrak S}_n$, such that 
$$\sum_{\tau\in (ij)(kl)}F(\sigma\tau)=0 \qquad \forall \sigma,\; \forall i,j,k,l,$$
is at most two-dimensional. We prove that $F\in C_n$ is completely determined by 
its values on permutations of cycle type $(1^n)$ or $(21^{n-2})$.  Recall that the value
of a class function $F$ on a permutation $\sigma$ only depends on its cycle type, 
which is encoded by a permutation $\lambda$. We will thus write $F(\lambda)$ 
rather than $F(\sigma)$. 
Apply induction on the number of fixed points in $\sigma$. Suppose that $\sigma$ has 
at least two nontrivial cycles. Choose $i$ and $k$ in these two cycles and let $j=\sigma(i)$,
$l=\sigma(k)$, then the three permutations $\sigma (ij)$, $\sigma (kl)$,  $\sigma (ij)(kl)$
have more fixed points than $\sigma$. If $\sigma$ has a cycle of length at least four, 
  take $i$ in this cycle and let $j=\sigma(i)$, $k=\sigma(j)$, $l=\sigma(k)$, to obtain the same
conclusion. Finally, if $\sigma$ is of cycle type $31^{n-3}$, say with a nontrivial cycle
$(123)$,  choose $ijkl=1234$. This gives the relation $2F(31^{n-3})+F(41^{n-4})+F(21^{n-2})=0$.
On the other hand, when $\sigma$ has cycle type $41^{n-4}$, with nontrivial cycle $(1234)$, 
  let $ijkl=1324$, which yields the relation $F(41^{n-4})+F(221^{n-4})=0$. And if 
$\sigma$ has cycle type $221^{n-4}$, with nontrivial cycles $(12)(34)$, 
letting $ijkl=1234$ gives the relation $F(221^{n-4})+2F(21^{n-2})+F(1^n)=0$.
These three identities altogether imply that $F(31^{n-3})$ is determined by $F(21^{n-2})$
and $F(1^n)$, and then the induction argument shows that $F$ is completely determined
by these two values.\qed 

\medskip Our discussion implies 
$$\hat{T}_{[det_n]}{\mathcal Dual}_{2n-2,n,n^2}=\hat{T}_{[det_n]}PGL(M_n).[det_n].$$
Theorem  \ref{smooth}   immediately follows.

\subsection{On the boundary of the orbit of the determinant}
Decompose a matrix $M$ into its symmetric and skew-symmetric parts 
$S$ and $A$. Define a polynomial $P_{\Lambda}\in S^n(M_n(\CC))^*$ by letting
$$
P_{\Lambda}(M)= \tdet_n(A,\ldots, A,S). 
$$
This is easily seen to be zero for $n$ even so we suppose $n$ to be odd. 
More explicitly, $P_{\Lambda}$ can   be expressed as follows. 
Let $Pf_i(A)$ denote the Pfaffian of the skew-symmetric matrix, of even size, 
obtained from $A$ by suppressing its $i$-th row and column. Then 
$$P_{\Lambda}(M)=\sum_{i,j}s_{ij}Pf_i(A)Pf_j(A).$$

\begin{prop}\label{Lambda}
The polynomial $P_{\Lambda}$ belongs to the orbit closure of the determinant. Moreover, 
$\ol{GL(W)\cdot P_{\Lambda}}$ is an irreducible codimension one component of the boundary of 
$\oldet$, not contained in $\tEnd(W)\cdot [\tdet_n]$. In particular $\ol{dc}(P_{\Lambda,m})=m<dc(P_{\Lambda,m})$.
\end{prop}

\proof The first assertion is clear: for $t\ne 0$, one can define an invertible 
endomorphism $u_t$ of $M_n(\CC)$ by $u_t(A+S)=A+tS$, where $A$ and $S$ are the skew-symmetric
and symmetric parts of a matrix $M$ in $M_n(\CC)$. Since the determinant of a skew-symmetric
matrix of odd size vanishes,  
$$(u_t\cdot \tdet_n)(M)=\tdet_n(A+tS)=nt\tdet_n(A,\ldots ,A,S)+O(t^2),$$
and therefore $u_t\cdot [\tdet_n]$ converges to $[P_{\Lambda}]$ when $t$ goes to zero. 

To prove the second assertion, we compute the stabilizer of $P_{\Lambda}$ inside 
$GL(M_n(\CC))$. The easiest way to make this computation uses the decomposition 
$\BC^n\ot \BC^n=\La 2\BC^n\oplus S^2 \BC^n$ of the space of matrices into skew-symmetric and 
symmetric ones. 
The action of $GL_n(\CC)$ on $M_n(\CC)$ by $M\mapsto gMg^t$ preserves $P_{\Lambda}$ up to
scale, and the Lie algebra of the stabilizer of $[P_{\Lambda}]$ is a $GL_n(\CC)$ 
submodule of $End(M_n(\CC))$. We have the decomposition into $GL_n(\CC)$-modules:
$$End(M_n(\CC))=End(\Lambda^2\op S^2)=End(\Lambda^2)\op End(S^2)
\op Hom(\Lambda^2,S^2)\op Hom(S^2,\Lambda^2).$$ 
Moreover, $End(\Lambda^2)={\mathfrak gl}_n\oplus EA$ and $End(S^2)={\mathfrak gl}_n\oplus ES$,
where $EA$ and $ES$ are distinct irreducible $GL_n(\CC)$-modules. Similarly, $Hom(\Lambda^2,S^2)=
{\mathfrak sl}_n\oplus EAS$ and $Hom(S^2,\Lambda^2)=
{\mathfrak sl}_n\oplus ESA$, where $EAS$ and $ESA$ are irreducible, pairwise distinct and 
different from $EA$ and $ES$. Then one can check that the modules $EA,ES,EAS,ESA$ are not contained
in the stabilizer, and that the contribution of the remaining terms is isomorphic 
with ${\mathfrak gl}_n\oplus {\mathfrak gl}_n$. In particular it has dimension $2n^2$,
which is one more than the dimension of the stabilizer of $[\det_n]$. This implies  
$\ol{GL(W)\cdot P_{\Lambda}}$ has codimension one in $\oldet$. Since it is not contained
in the orbit of the determinant, it must be an irreducible component of its
boundary. Since the zero set is not a cone (i.e., the equation involves all the variables), $P_{\Lambda}$ cannot
be in $\tEnd(W)\cdot \tdet_n$ which consists of $GL(W)\cdot \tdet_n$ plus cones. \qed

\medskip The hypersurface defined by $P_\Lambda$ has interesting properties. 

\begin{prop}\label{Lambdaex}
The dual variety of the hypersurface $Z(P_\Lambda)$ is isomorphic to 
the Zariski closure of  
$$\PP \{v^2\oplus v\wedge w\in S^2\BC^n\op \La 2\BC^n, \; v,w\in\CC^n\}
\subset \PP (M_n(\CC)).$$
\end{prop}

As expected, $Z(P_\Lambda)^*$ is close to being a Segre product $\PP^{n-1}\times \PP^{n-1}$. 
It can be defined as the image of the projective bundle 
$\pi : \PP(E)\rightarrow\PP^{n-1},$
where $E={\mathcal O}(-1)\oplus Q$ is the sum of the
tautological and quotient bundles on $\PP^{n-1}$, by a sub-linear system of 
${\mathcal O}_E(1)\otimes \pi^*{\mathcal O}(1)$. This sub-linear system
contracts the divisor $\PP(Q)\subset \PP(E)$ to the Grassmannian 
$G(2,n)\subset \PP \La 2\BC^n$.

\section{A large irreducible component of $ {\mathcal Dual}_{k,d,N}$}
Let $Sub_{k+2}(S^dW^*)$ be the projectivization of 
$$ \{  P \in S^dW^* \mid
\exists U^*\subset W^*,\   \tdim(U^*)=k+2, \ {\rm and}\ P\in S^dU^*\}, 
$$
  the {\it subspace variety} of hypersurfaces of degree $d$ in $\BP W$ that are cones
over some $Z^k\subset \BP^{k+1}\subset \BP W$.
The reduced, irreducible variety $Sub_{k+2}(S^dW^*)$   
  is of dimension $k+1+ (k+2)(N-(k+2))$ and its ideal
is generated in degree $k+3$ (see  \cite[\S 7.2]{weyman}).

If $[P]\in Sub_{k+2}(S^dW^*)$, then $Z(P)\subset \BP W$ is a cone with an $(N-k-1)$-dimensional vertex $\BP (U^*)\upperp$, and
$Z(P)^*\subset \BP U^*$. In particular $\tdim (Z(P)^*)\leq k$.

\begin{proposition} $Sub_{k+2}(S^d\BC^N)$ is a reduced, irreducible component of $ {\mathcal Dual}_{k,d,N}$.
\end{proposition}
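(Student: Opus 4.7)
The plan is to adapt the tangent-space argument of Section~\ref{sec32} to a generic $[P] \in Sub_{k+2}(S^dW^*)$. First I would establish $Sub_{k+2} \subseteq \mathcal{Dual}_{k,d,N}$: writing $P \in S^dU^*$ and $V := (U^*)^\perp \subset W$, the Hessian $H_{P,w}$ kills $V$ for every $w \in W$, so every $(k+3)$-dimensional $F \subset W$ meets $V$ nontrivially, $H_{P,w}|_F$ is degenerate, and $\det(H_P|_F) \equiv 0$ as a polynomial in $w$. All the equations $R_{F,D}$ of Theorem~\ref{degdualeqns} therefore vanish at $[P]$.

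Next, at a generic $[P] \in Sub_{k+2}$ --- one where $\dim Z(P|_U)^* = k$, the partials $\partial_{u_i}P$ are linearly independent in $S^{d-1}U^*$, and $\det H_{P|_U,\bar w}$ is coprime to $P$ as polynomials in $\bar w$ --- I would compute the Zariski tangent space of $\mathcal{Dual}_{k,d,N}$ at $[P]$. Since $Q_F = 0$ for every $F$, the first-order condition from~\eqref{Zar} collapses to $P \mid \det(H_P,\ldots,H_P,H_\pi)|_F$ for each $(k+3)$-dimensional $F$. Choosing $F = F' \oplus \CC X$ with $F' \subset W$ transverse to $V$ and $X \in V$, a block-determinant expansion gives
\[
\det(H_P,\ldots,H_P,H_\pi)|_F(w) = H_{\pi,w}(X,X)\,\det(H_{P,w}|_{F'}),
\]
and the coprimality together with $\deg_w H_{\pi,w}(X,X) = d-2 < \deg P$ forces $H_{\pi,w}(X,X) \equiv 0$ identically, i.e., $\pi(w,\ldots,w,X,Y) = 0$ for all $w \in W$ and $X,Y \in V$.

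Finally I would match this condition with the tangent space of $Sub_{k+2}$ itself. Using the parameterization over $G(k+2,W^*) \times \PP(S^dU^*)$, one obtains $\hat T_{[P]}Sub_{k+2} = S^dU^* \oplus V^* \!\cdot\! (U\!\cdot\! P)$, where $U\!\cdot\! P \subset S^{d-1}U^*$ denotes the $(k+2)$-dimensional span of the first partials of $P$. Once the two tangent spaces agree, $\mathcal{Dual}_{k,d,N}$ is smooth at $[P]$ of dimension $\dim Sub_{k+2}$, and $Sub_{k+2}$ is a reduced irreducible component. The main obstacle is precisely this matching: my Step~2 argument only yields $\pi \in S^dU^* \oplus (V^* \otimes S^{d-1}U^*)$, which for $d \geq 3$ is strictly larger than $\hat T_{[P]}Sub_{k+2}$. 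To rule out the spurious $V^*$-linear directions not of the form $\sum_i \phi_i\,\partial_{u_i}P$, I would extract sharper linear conditions from the equations $R_{F,D}$ by varying the line $D$ and plane $L$ inside $F$ as in Section~2.3, or invoke a second-order obstruction computation showing that such directions do not integrate to a curve in $\mathcal{Dual}_{k,d,N}$, so that the tangent space of $\mathcal{Dual}_{k,d,N}^{\mathrm{red}}$ already coincides with $\hat T_{[P]}Sub_{k+2}$.
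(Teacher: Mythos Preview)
Your overall plan matches the paper's proof: establish the containment $Sub_{k+2}\subset\mathcal{Dual}_{k,d,N}$, identify $\hat T_{[P]}Sub_{k+2}=S^dU^*\oplus V^*\!\cdot(U\!\cdot P)$, and show via the first-order linearization of the scheme equations at a generic cone $P$ that any tangent vector $\pi$ satisfies $H_{\pi,w}(X,X)\equiv 0$ for $X\in V$, hence $\pi\in S^dU^*\oplus S^{d-1}U^*\otimes V^*$. You are also right that this is strictly weaker than $\pi\in\hat T_{[P]}Sub_{k+2}$, so a further argument is needed. Where you stop is exactly where the real content of the paper's proof lies.

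Your option (a) cannot close the gap. The totality of first-order conditions $dR_{F,L,D}|_P(\pi)=0$, as $(L,D)$ range over all choices, is precisely the single divisibility $P\mid \det(H_P,\ldots,H_P,H_\pi)|_F$, and you have already extracted its full content; varying $F$ adds nothing new because every admissible $F'$ projects isomorphically to $U$ and $\det(H_{P}|_{F'})$ is just $\det(H_P|_U)$. The paper instead carries out your option (b), but not framed as ``directions that fail to integrate.'' With $P_\epsilon=P+\epsilon\pi+\epsilon^2\tau$ and $F=U\oplus\mathbb{C}X$ for $X\in V$, the order-$\epsilon^2$ identity in $\det(H_{P_\epsilon}|_F)=P_\epsilon Q_{F,\epsilon}$ (using $Q_F=Q_F'=0$) reads
\[
\det(H_P|_U)\,\frac{\partial^2\tau}{(\partial X)^2}
\;+\;\Bigl(\text{terms with two columns of }H_P|_U\text{ replaced by }\tfrac{\partial^2\pi}{\partial u_i\,\partial X}\Bigr)
\;=\;P\,Q_F''.
\]
The decisive trick, which your sketch does not supply, is to evaluate at $w\in Z(P)\cap Z(\det H_P|_U)$: the right side vanishes because $P(w)=0$, and the $\tau$-term vanishes because $\det H_P|_U(w)=0$, leaving a relation purely in $\pi$. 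That relation forces the column vector $(\partial^2\pi/\partial u_i\,\partial X)_i$ into the column space of $H_P|_U$, whence $\partial\pi/\partial X$ lies in the span of the $\partial P/\partial u_i$, i.e.\ $\pi\in\hat T_{[P]}Sub_{k+2}$. The missing idea in your proposal is precisely this elimination of the free second-order datum $\tau$ by restricting to the codimension-two locus where both $P$ and $\det(H_P|_U)$ vanish.
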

\begin{proof}
Let $W^*=\BC^N$ and let  $P\in Sub_{k+2}(S^dW^*)$ be a general point. Write $P\in S^dU^*$. It follows from the Kempf-Weyman desingularization
described in \cite[\S 7.2]{weyman} that
$$
\hat T_{[P]}Sub_{k+2}(S^dW^*)= S^dU^* + \langle (u\intprod P)\circ \a \mid u\in U, \ \a\in W^*\rangle .
$$
If we choose a complement $V^*$ to $U^*$ in $W^*$ we may write
$$
\hat T_{[P]}Sub_{k+2}(S^dW^*)= S^dU^* \op  \langle (u\intprod P)\circ \a \mid u\in U, \ \a\in V^*\rangle.
$$
We must show
$$\hat T_{[P]} {\mathcal Dual}_{k,d,N}\subseteq \hat T_{[P]}Sub_{k+2}(S^dW^*)
\subset S^dU^*\op S^{d-1}U^*\ot V^* \subset S^d(U^*\op V^*).
$$

Following the notation and discussion of \S\ref{sec32},
  in equation \eqref{Zarb}, for $P\in Sub_{k+2}(S^dW^*)$, since the determinant of the Hessian on any
$k+3$-plane vanishes,  $Q_F=0$, and  
we conclude
$
H_{\pi,w}(X)=0
$
for all $[w]\in Z(P)$ and for {\it all} $X\in V$.
This says the degree $d-2$ hypersurface $Z(H_{\pi,-}(X))$
is contained in the {\it irreducible} degree $d$ hypersurface $Z(P)$, which implies
$\pi(X,X,\cdot \hd \cdot)= \frac{\partial^2\pi}{(\partial X)^2}=0$ for all $X\in V$, i.e.,
$\hat T_{[P]} {\mathcal Dual}_{k,d,N}\subseteq S^dU^*\op S^{d-1}U^*\ot V^*$.
To obtain the restrictions on the term in $S^{d-1}U^*\ot V^*$ we must
consider the term of order two in $\ep$
in the expansion of  $\tdet(H_{P_{\ep}}\mid_F)=P_{\ep}Q_{\ep,F}$.   With our choice of splitting we may identify $U=(V^*)\upperp \subset W$ and   take $F'=U$.
(In other words, the choice of $F'$ is equivalent to choosing the splitting.)
Note the order $\ep$ term also implies in this case that 
$
Q_F'=0$.

 The terms on the left hand side that potentially could contribute to the $\ep^2$ coefficient are in
$$ 
\det \begin{pmatrix}
\frac{\partial^2P}{\partial u_i\partial u_j} + \ep \frac{\partial^2\pi}{\partial u_i\partial u_j} & \ep \frac{\partial^2\pi}{\partial u_i\partial X}\\
\ep \frac{\partial^2\pi}{\partial u_j\partial X} & \ep \frac{\partial^2\pi}{( \partial X)^2} + \ep^2 \frac{\partial^2\t}{( \partial X)^2}
\end{pmatrix}
$$
The actual contribution is the sum of $\tdet_{k+2}(H_P|_U)\frac{\partial^2\t}{( \partial X)^2}$
and terms substituting  two entries from
$ \frac{\partial^2\pi}{\partial u_i\partial X}$ for two of the columns of
$H_P|_U=\frac{\partial^2P}{\partial u_i\partial u_j}$. 
The right hand side is zero.

Choose $[w]\in Z(P)\cap Z(\tdet_{k+2}(H_P|_U))$, and note that we can take a basis of
elements of $W$ of this form, so the first term is zero.
We conclude that the column vector $ \frac{\partial^2\pi}{\partial u_i\partial X}$ is a linear
combination of the columns of
$\frac{\partial^2P}{\partial u_i\partial u_j}$ which implies
$ \frac{\partial \pi}{ \partial X}$ is a linear combination of the 
$\frac{\partial P}{\partial u_i }$, i.e. that $ \frac{\partial \pi}{ \partial X}=u\intprod P$ for some $u\in U$ which is what we needed to prove.
\end{proof}

\bibliographystyle{amsplain}
 
\bibliography{Lmatrix}

\end{document}